\documentclass{amsart}

\usepackage{amssymb, amsfonts}
\usepackage{amsmath}
\usepackage{xcolor}
\usepackage{mathtools}
\usepackage{enumerate}
\usepackage{stmaryrd}

\newtheorem{thm}{Theorem}[section]
\newtheorem{prop}[thm]{Proposition}
\newtheorem{lem}[thm]{Lemma}
\newtheorem{cor}[thm]{Corollary}

\theoremstyle{definition}
\newtheorem{definition}[thm]{Definition}

\theoremstyle{remark}

\newtheorem{remark}[thm]{Remark}

\numberwithin{equation}{section}

\newcommand{\R}{\mathbb{R}}

\begin{document}


\title[Error estimates of RKDG schemes for fractional conservation laws]{A priori error estimates of Runge-Kutta discontinuous Galerkin schemes to smooth solutions of fractional conservation laws}


\author{Fabio Leotta}
\address{Department of Mathematics, Technical University Darmstadt, 
Germany}
\email{leotta@mathematik.tu-darmstadt.de}


\author{Jan Giesselmann}
\address{Department of Mathematics, Technical University Darmstadt, 
	Germany}
\email{giesselmann@mathematik.tu-darmstadt.de}



\begin{abstract}
We give a priori error estimates of second order in time fully explicit Runge-Kutta discontinuous Galerkin schemes using upwind fluxes to smooth solutions of scalar fractional conservation laws in one space dimension. Under the time step restrictions $\tau\leq c h$ for piecewise linear and $\tau\lesssim h^{4/3}$ for higher order finite elements, we prove a convergence rate for the energy norm $\|\cdot\|_{L^\infty_tL^2_x}+|\cdot|_{L^2_tH^{\lambda/2}_x}$ that is optimal for solutions and flux functions that are smooth enough. Our proof relies on a novel upwind projection of the exact solution.
\end{abstract}


 \maketitle

\tableofcontents

\section{Introduction}

We consider the fractional conservation law
\begin{equation}\label{FCL}
\begin{cases}
\partial_t u (t,x) + \partial_x f(u(t,x)) = g_\lambda[u](t,x)\qquad &(t,x)\in Q_T:=[0,T]\times\R\\
u(0,x)=u_0(x)\qquad &x\in\R,
\end{cases}
\end{equation}
where $g_\lambda$ is the nonlocal fractional Laplace operator $-(-\partial^2_x)^{\lambda/2}$ for some $\lambda\in(0,1)$. The operator can be defined by Fourier transform via
\begin{equation}
\widehat{g_\lambda[\varphi]}(y)=-|y|^\lambda\widehat{\varphi}(y),
\end{equation}
or, equivalently, by the integral formula of Droniou and Imbert \cite{DroniouImbert06},
\begin{equation}
g_\lambda[\varphi](x)=c_\lambda\int_\R\frac{\varphi(x+z)-\varphi(x)}{|z|^{1+\lambda}}~dz,
\end{equation}
where $c_\lambda>0$ is a constant that depends solely on $\lambda$.

While the standard Laplacian $\Delta$ can be used to describe diffusion of particles that are governed by a Gaussian probability density $p(dx)$ for a displacement of $dx$ in an incremental time step, the fractional Laplacian $g_\lambda$ can be used to model diffusion processes that are governed by L\`evy probability densities; cf.~\cite{AbeThurner05} and \cite{SimonOlivera17}. Physically motivated applications may range from molecular biology and radiation \cite{Bouchaud95} to hydrodynamics \cite{WeeksSolomonUrbachSwinney95}; see also \cite{TankCont15} for applications in mathematical finance. 

In the case $\lambda\in(1,2)$, it has been shown in \cite{DroniouGallouetVovelle02} that (\ref{FCL}) turns $L^\infty$ initial data into a unique smooth solution as for the classical case $\lambda=2$. This general regularizing effect, however, is lost for $\lambda\in(0,1)$ and the solution may develop shocks in finite time (cf.~\cite{AlibaudDroniouVovelle07}) even for smooth initial data. Accordingly, an entropy formulation for fractional conservation laws has been developed by Alibaud \cite{Alibaud06} that guarantees well-posedness in the $L^\infty$ framework for $\lambda\in(0,2)$.

Regarding numerical methods for (\ref{FCL}), Droniou \cite{Droniou10} was the first\footnote{There have been some probabilistic approximation approaches for $\lambda>1$ before, as in \cite{JourdainMeleardWoyczynski05}, where the conservation law is differentiated to derive a stochastic interpretation.} who considered a (fully discrete) finite volume method for $\lambda\in(0,2)$ and proved convergence (for $L^\infty\cap \operatorname{BV}$ initial data) to the unique entropy solution but did not recover any convergence rate. In \cite{CifaniJakobsen14} Cifani and Jakobsen have been able to give convergence rates in the $L^1$-norm for a low order numerical method in the general case $\lambda\in(0,2)$. In \cite{CifaniJakobsenKarlsen10}, Cifani et al.~propose a discontinuous Galerkin (DG) method for $\lambda\in(0,1)$ and prove convergence in the $L^\infty L^1$-norm for an implicit-explicit and a fully explicit finite volume scheme (for $L^1\cap\operatorname{BV}$ initial data) with order $\sqrt{h}$ and $\min\{\sqrt{h},h^{1-\lambda}\}$, respectively. Additionally, they proved that in the case of linear fluxes the semidiscrete DG method converges in $L^\infty L^2$ with order $h^{k+\frac{1}{2}}$ provided the exact solution is smooth. Similarly, for $\lambda\in(1,2)$, Xu and Hesthaven propose a semi-discrete discontinuous Galerkin method in \cite{XuHesthaven14} and prove convergence in the $L^2$-norm with order $h^{k+\frac{1}{2}}$.

Since the convective part of fractional conservation laws (\ref{FCL}) with $\lambda\in(0,1)$ seems to dominate the dynamics, it is natural to compare to established results from the purely hyperbolic community.  As has been proven by Zhang and Shu in \cite{ZhangShu06}, explicit second order in time Runge-Kutta discontinuous Galerkin (RKDG) methods using upwind fluxes for purely hyperbolic conservation laws $\partial_t u + \partial_x f(u)=0$, converge with the optimal order of $h^{k+1}+\tau^2$ as long as the solution is smooth enough. In this paper, we generalize this high order accuracy to RKDG methods for the fractional conservation law (\ref{FCL}). 

Although the appearance and persistence of shocks in the exact solution of a fractional conservation law may inhibit optimal convergence rates for higher order methods, one expects better convergence properties away from the shocks where the solution is regular. In this sense, it is of interest to investigate not only worst case but also best case approximation results for such non-regularizing PDEs.

A short overview of this paper is given as follows. In section 2 we state our main result, the convergence theorem, for the second order total variation diminishing RKDG scheme that is stated in section 3. In section 4 we present the proof of our main result. Here we establish the central energy identity for the numerical scheme, introduce a novel upwind projection operator and show how it facilitates optimal estimates for the right-hand side of the energy identity; it culminates in the application of a modified Gronwall argument which yields the convergence rates stated in our main result.  
 
 \section{Main result}
In this paper, we propose a second order in time fully explicit RKDG method for (\ref{FCL}) and prove the following a priori estimate.  

\begin{thm}\label{thm:main}
	Let $\alpha\in\mathbb{N}$ with $\alpha\geq 3$, $f\in C^{\alpha+1}(\mathbb{R})$ and $u\in C^\alpha([0,T];H^{k+1}(\mathbb{R}))$ be the exact solution to the fractional conservation law (\ref{FCL}). Let $u_h^n$, a piecewise discontinuous polynomial of degree $k\geq 1$, be the numerical solution of the RKDG scheme from section \ref{sec:RKDG}, at time level $t^n$, with CFL condition $\tau\leq c h$ for a certain $c>0$ in the case $k=1$ and $\tau\lesssim h^{4/3}$ for $k\geq2$. Then we have
	\begin{equation}\label{eq:mainres}
	\|u^n-u^n_h\|_{L^2(\R)} + \left(\sum_{m=0}^{n-1}\tau|u^m-u^m_h|^2_{H^{\lambda/2}(\mathbb{R})}\right)^{1/2}\lesssim h^{k+1-\frac{1}{\alpha}}+h^{k+1-\frac{\lambda}{2}}+\tau^2,
	\end{equation}
	for all $n\in\{1,\ldots,N\}$, where $u^n:=u(t^n,\cdot)$.
\end{thm}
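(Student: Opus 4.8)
The plan is to follow the now-classical energy method for RKDG error estimates à la Zhang--Shu \cite{ZhangShu06}, but with the fractional dissipation term $g_\lambda$ built into the energy and a new projection tailored to the upwind flux. First I would set up the semidiscrete-in-space bilinear form: write the DG weak formulation of \eqref{FCL}, where the convective term produces the usual upwind-flux volume/interface contributions and $g_\lambda[u]$ produces a symmetric, nonnegative form $a_\lambda(\cdot,\cdot)$ satisfying $a_\lambda(v,v)\simeq |v|_{H^{\lambda/2}}^2$ (this is where the $|\cdot|_{L^2_tH^{\lambda/2}_x}$ part of the energy norm comes from). Testing the error equation against the error $e_h^n = P u^n - u_h^n$, where $P$ is the upwind projection, I would derive a discrete energy identity: the time-difference of $\tfrac12\|e_h^n\|_{L^2}^2$ plus $\tau$ times the coercive fractional term equals a sum of (a) projection-error terms from the convective flux, (b) projection-error terms from the fractional operator, (c) time-discretization (Runge--Kutta truncation) terms, and (d) nonlinearity remainders from Taylor-expanding $f$.

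The key technical device is the \emph{upwind projection} $P$ promised in the abstract: unlike the standard $L^2$ or Gauss--Radau projection, $P$ should be designed so that the troublesome convective interface term $\sum_j \llbracket f'(u)(u - Pu)\rrbracket$-type contribution is annihilated or rendered superconvergent, while still giving optimal $O(h^{k+1})$ bounds for $\|u - Pu\|_{L^2}$ and, crucially, a good bound for $|u - Pu|_{H^{\lambda/2}}$. I expect one needs $|u - Pu|_{H^{\lambda/2}} \lesssim h^{k+1-\lambda/2}$ — a fractional-order interpolation estimate — which is exactly the source of the $h^{k+1-\lambda/2}$ term on the right-hand side of \eqref{eq:mainres}. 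The nonlocal term must be handled carefully because $g_\lambda$ does not respect the mesh: I would split $a_\lambda(u - Pu, e_h^n)$ using the integral representation, estimate the near-diagonal part by the fractional seminorm of the projection error and the far-field part by $L^2$ bounds, absorbing the coercive piece $\varepsilon\,\tau\,|e_h^n|_{H^{\lambda/2}}^2$ into the left-hand side.

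For the time discretization I would expand the two-stage SSP Runge--Kutta update and identify the local truncation error, which is $O(\tau^3)$ per step and involves spatial derivatives of the DG operator applied to $u$; after summation over $n$ this yields the $\tau^2$ contribution, but only provided the DG operator applied to smooth functions is bounded appropriately — this is where the inverse inequality enters and forces the CFL restrictions $\tau \le ch$ ($k=1$) and $\tau \lesssim h^{4/3}$ ($k\ge 2$), the latter being the weaker condition needed to control the extra powers of $h^{-1}$ coming from the stage-error terms combined with the fractional operator's scaling (which costs an extra $h^{-\lambda}$, milder than the $h^{-2}$ of the Laplacian, but still present). The factor $h^{-1/\alpha}$ in the final estimate comes from using $u \in C^\alpha_t H^{k+1}_x$ with only finitely many time derivatives: a standard trick trades a log or a small negative power of $h$ for limited temporal regularity when estimating the RK truncation terms.

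The main obstacle, I expect, is constructing the upwind projection $P$ and proving it simultaneously (i) eliminates/superconverges the convective interface error term for \emph{nonlinear} $f$ — which typically requires $P$ to depend on the sign of $f'(u)$ and hence to vary from cell to cell, raising well-definedness and stability questions — and (ii) admits the fractional estimate $|u-Pu|_{H^{\lambda/2}} \lesssim h^{k+1-\lambda/2}$, since the natural local projections are defined cell-by-cell whereas the $H^{\lambda/2}$ seminorm is global and sees jumps across element boundaries. Reconciling the cell-local construction with the global fractional norm, and then closing everything with a Gronwall argument that tolerates the $\tau$-weighted sum of fractional seminorms on the right-hand side (a "modified Gronwall" as the introduction puts it), is the crux of the argument; the rest is careful but routine bookkeeping of Taylor remainders and inverse inequalities.
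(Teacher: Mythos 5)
Your overall architecture (error splitting with a sign-of-$f'(u)$-dependent projection, energy identity with the coercive fractional seminorm absorbed on the left, Taylor expansion of $f$, inverse inequalities driving the CFL restrictions, Gronwall) matches the paper, and your diagnosis of the central tension --- a cell-local upwind projection versus the global $H^{\lambda/2}$ seminorm and a nonlinear $f$ --- is accurate. However, there is a genuine gap: you misidentify the source of the $h^{-1/\alpha}$ loss and, with it, the actual crux of the proof. The exponent $1/\alpha$ has nothing to do with the Runge--Kutta truncation error (which is handled by a plain $C^3$-in-time Taylor expansion and contributes the clean $\tau^2$). It comes from the fact that, unlike in the purely hyperbolic case of Zhang--Shu, the upwind projection $\Pi^n_h$ \emph{must change in time}: the fractional diffusion destroys the characteristics argument that keeps the sign pattern of $f'(u^n)$ frozen, so the cellwise choice of left versus right Gauss--Radau projection drifts. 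Whenever $\Pi^{n+1,\,j}_h\neq\Pi^{n,\,j}_h$, the time-step difference $\xi^{n+1}_\pi-\xi^n_\pi$ of projection errors appearing in the error equation is only $O(h^{k+1})$ rather than $O(h^{k+1}\tau^{1/2})$; paying this at every one of the $N\sim\tau^{-1}$ steps would degrade the rate to $h^{k}$ under $\tau\lesssim h$. Your sketch has no mechanism to prevent this.

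The paper's fix, which your proposal lacks, is twofold. First, the projection is defined with a hysteresis tolerance of size $h$: the right (resp.\ left) Gauss--Radau projection is selected only when $f'(u^n)>h$ (resp.\ $<-h$) on the cell, and otherwise the previous time level's choice is reused. Second, a counting lemma (Lemma \ref{lem:O_j}) uses the assumed regularity $u\in C^\alpha([0,T];C_b(\R))$, via finite differences of $f'\circ u$ in time and the mean value theorem, to show that the set $O_j$ of switching times in each cell has cardinality at most of order $h^{-1/\alpha}$. The Gronwall argument is then run elementwise with a growth factor that is $1+ch^{1/\alpha}$ at switching times and $1+c\tau$ otherwise, so the accumulated penalty is $h^{2k+2-2/\alpha}$ --- exactly the $h^{k+1-1/\alpha}$ term in the theorem. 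This is also precisely where the hypothesis $\alpha\geq 3$ and the extra regularity beyond the hyperbolic setting enter; your attribution of these to the RK stage errors would not close the argument. A secondary, smaller imprecision: the split between $\tau\leq ch$ for $k=1$ and $\tau\lesssim h^{4/3}$ for $k\geq 2$ is inherited from the hyperbolic analysis (it arises from bounding $\tau^2\|\partial_x^h(\zeta^n_h-\xi^n_h)\|^2$, where for $k=1$ an orthogonality trick with the piecewise constant projection is available), not from the fractional operator's scaling.
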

Note that we write $x\lesssim y$ if there exists a constant $a>0$ independent of $n$,$h$ and $\tau$ such that $x\lesssim ay$.

\begin{remark}
	The convergence rate is optimal for $\alpha\geq 2/\lambda$ due to the $H^{\lambda/2}$-norm on the left-hand side of (\ref{eq:mainres}).
\end{remark}

\begin{remark}
	The regularity assumption $u\in C^\alpha([0,T];H^{k+1}(\mathbb{R}))$ is chosen for convenience as it implies all the conditions used in the proofs of this paper, namely $u\in C^3([0,T];H^1(\R))$ for consistency (cf.~Lemma \ref{lem:consistency}), $u\in H^1(0,T;H^{k+1}(\R))$ to control (time step differences of) projection errors (cf.~(\ref{eq:tsdifference1})-(\ref{eq:tsdifference2})) and lastly $u\in C^\alpha([0,T];C_b(\R))$ to quantitatively control sign changes in $f'(u)$ (cf.~Lemma \ref{lem:O_j}). In particular, the latter regularity assumption is the only one added to those made in the setting of purely hyperbolic conservation laws in  \cite{ZhangShu06}.
\end{remark}

\section{The RKDG method}\label{sec:RKDG}
Let us introduce (for simplicity) the equidistant space grid $x_j=jh,~j\in\mathbb{Z}$ and let us label the cells as $I_j=(x_j,x_{j+1})$. The space of polynomials of degree at most $k\in\{1,2,3,\ldots\}$ with defined on the interval $I_j$ is denoted by $P^k(I_j)$ and we will work with
\begin{equation}
V_h=V^k_h=\{\phi\in L^2(\R)~:~\phi|_{I_j}\in P^k(I_j)~\forall j\in\mathbb{Z}\}
\end{equation}
as our finite element space. Further, the time grid is given by $t_n=n\tau$, where $n\in\{0,\ldots,N\}$ and $N\tau=T$.

Note that $V_h\subset H^{\lambda/2}(\R)$ with
\begin{equation}\label{eq:fracinv}
|\phi|^2_{H^{\lambda/2}(\R)}\lesssim h^{-\lambda}\|\phi\|^2_{L^2(\R)},
\end{equation}
for all $\phi\in V_h$; cf.~the appendix in \cite{CifaniJakobsenKarlsen10}.

Given $u^n_h\in V_h$, the approximate solution at time $t^{n+1}$ is given by the second order in time fully explicit RKDG scheme as follows: find $w^n_h,u^{n+1}_h\in V_h$ such that for all $j\in\mathbb{Z}$ and $p_h, q_h\in P^k(I_j)$ there holds
\begin{subequations}\label{scheme}
\begin{align}
	\int_{I_j}w^n_h p_h~dx =& \int_{I_j}u^n_h p_h~dx + \tau \mathcal{H}_j(u^n_h,p_h) + \tau\mathcal{D}_j(u^n_h,p_h)\\
	\int_{I_j}u^{n+1}_h q_h~dx =&~ \frac{1}{2}\int_{I_j}(u^n_h + w^n_h) q_h~dx + \frac{\tau}{2}\mathcal{H}_j(w^n_h,q_h) + \frac{\tau}{2}\mathcal{D}_j(w^n_h,q_h)
\end{align}
\end{subequations}
with the initial datum $u_h^0$ given by a suitable projection $\Pi^0_hu^0$; cf.~section \ref{GaussRadau}. The operators $\mathcal{H}_j:H^1(I_j)\times V_h\rightarrow\R$ and $ \mathcal{D}_j:H^{\lambda/2}(\R)\times H^{\lambda/2}(\R)\rightarrow\R$ are defined by
\begin{align}
\mathcal{H}_j(p,q) &:= \int_{I_j}f(p)q_x~dx - \hat{h}(p)_{j+1}q^-_{j+1} + \hat{h}(p)_{j}q^+_{j},\label{eq:H}\\
\mathcal{D}_j(p,q) &:= (g_\lambda[p],\mathbf{1}_{I_j}q)_{H^{-\lambda/2}(\R),H^{\lambda/2}(\R)},
\end{align}
where $\hat{h}(p)_j=\hat{h}(p^-_j,p^+_j)$ is an upwind numerical flux that depends on the two traces of the function $p$ at the boundary point $x_j$, namely $p^\pm_j=p(x^\pm_j)$. Note that by Fourier characterization of the fractional Sobolev norm it can be readily seen that the fractional Laplacian ${g_\lambda:H^{\lambda/2}(\R)\rightarrow H^{-\lambda/2}(\R)}$ is indeed bounded; cf.~\cite{NezzaPalatucciValdinoci12}.

We consider fluxes $\hat{h}(a,b)$ satisfying the following conditions:
\begin{enumerate}[(a)]
	\item It is locally Lipschitz continuous.
	\item It is consistent with the flux $f(p)$, i.e.~$\hat{h}(p,p)=f(p)$.
	\item It is monotone, i.e.~a nondecreasing function of its first argument and a nonincreasing function of its second argument.
\end{enumerate}
Recall that a numerical flux $\hat{h}(p)$ is called upwind if it satisfies
\begin{equation}
\hat{h}(p)=\begin{cases}
f(p^-)\quad &\text{if}~f'(q)\geq 0~\forall q\in[\min(p^-,p^+),\max(p^-,p^+)]\\ 
f(p^+)\quad &\text{if}~f'(q)< 0~\forall q\in[\min(p^-,p^+),\max(p^-,p^+)].
\end{cases}
\end{equation}

\subsection{The difference between numerical and physical flux}
In the upcoming proofs, we will need to measure the difference between the numerical flux and the physical flux. Following Zhang and Shu (cf.~Lemma 3.1 and (5.23) in \cite{ZhangShu06}), for any piecewise smooth function $p\in L^2(\R)$ we define on any boundary $x_j\in\R$ the quantity
\begin{equation}
a(p)_j\equiv a(p^+,p^-)_j:=\begin{cases}
\llbracket p\rrbracket_j^{-1}(f(\bar{p}_j)-\hat{h}(p)_j) &\text{if}~\llbracket p\rrbracket_j\neq 0\\
|f'(\bar{p}_j)| &\text{if}~\llbracket p\rrbracket_j=0,
\end{cases}
\end{equation}
where $\llbracket p\rrbracket_j=p^+_j - p^-_j$ and $\bar{p}_j=(p^+_j + p^-_j)/2$. We will drop the subscript if no confusion can arise.
\begin{lem}\label{lem:a}
	Let $f\in C^3(\R)$. Then $a(p)$ is non-negative and uniformly bounded for all $(p^+,p^-)\in\R^2$. Moreover, there holds
	\begin{subequations}
		\begin{align}
	\frac{1}{2}|f'(\bar{p})| &\leq a(p) + c_\ast|\llbracket p\rrbracket|\\
	-\frac{1}{8}f''(\bar{p})\llbracket p\rrbracket &\leq a(p) + c_\ast\llbracket p\rrbracket^2,
	\end{align}
	\end{subequations}
where the constant $c_\ast\geq0$ depends solely on the maximum of $|f''|$ and $|f'''|$.
\end{lem}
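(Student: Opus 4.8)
The plan is to reduce everything to Taylor expansion of $f$ around the mean $\bar p$. First I would treat the degenerate case $\llbracket p\rrbracket = 0$ separately: there $a(p) = |f'(\bar p)|$, which is non-negative, bounded by $\|f'\|_{L^\infty}$ (one uses here that $f\in C^3$ and, implicitly, that the relevant range of arguments is bounded, or one phrases boundedness on compact sets), and both inequalities (a) and (b) are then immediate since the right-hand sides reduce to $a(p)$ plus a non-negative term. So the content is the case $\llbracket p\rrbracket \neq 0$, where $a(p) = \llbracket p\rrbracket^{-1}\bigl(f(\bar p) - \hat h(p)\bigr)$.

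The key computation is to expand $f(p^\pm) = f(\bar p) \pm \tfrac12 f'(\bar p)\llbracket p\rrbracket + \tfrac18 f''(\bar p)\llbracket p\rrbracket^2 \pm \tfrac{1}{48} f'''(\xi^\pm)\llbracket p\rrbracket^3$ for suitable intermediate points $\xi^\pm$. Since $\hat h$ is a monotone, consistent numerical flux, it lies between $f(p^-)$ and $f(p^+)$ (more precisely, the monotonicity and consistency force $\hat h(a,b)$ to be a ``convex-combination-like'' value; in particular $\min(f(p^-),f(p^+)) \le \hat h(p) \le \max(\dots)$ is too weak, so instead one uses the sharper structural fact that $f(\bar p) - \hat h(p)$ has the same sign as $\llbracket p\rrbracket$ up to an $O(\llbracket p\rrbracket^2)$ correction). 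Concretely, writing $\hat h(p) = \hat h(p^-,p^+)$ and using that $\partial_a \hat h \ge 0 \ge \partial_b \hat h$ together with $\hat h(q,q) = f(q)$, one shows $f(\bar p) - \hat h(p) \ge \tfrac12|f'(\bar p)|\,|\llbracket p\rrbracket| - C\llbracket p\rrbracket^2$ and similarly $f(\bar p) - \hat h(p) \ge -\tfrac18 f''(\bar p)\llbracket p\rrbracket^2 - C|\llbracket p\rrbracket|^3$, where $C$ depends on $\|f''\|_{L^\infty}$ and $\|f'''\|_{L^\infty}$ on the relevant range. Dividing by $\llbracket p\rrbracket$ (and tracking its sign carefully) yields precisely (a) and (b) with $c_\ast$ depending only on $\max|f''|$ and $\max|f'''|$. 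Non-negativity of $a(p)$ follows because monotonicity gives that $\hat h(p)$ is ``squeezed toward the upwind value,'' so $f(\bar p) - \hat h(p)$ and $\llbracket p\rrbracket$ always share the same sign; uniform boundedness follows from Lipschitz continuity of $\hat h$ and of $f$, giving $|f(\bar p) - \hat h(p)| \lesssim |\llbracket p\rrbracket|$.

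I expect the main obstacle to be making the sign/squeezing argument for $\hat h$ rigorous and uniform: one must extract from the three abstract flux conditions (locally Lipschitz, consistent, monotone) the quantitative statement that $f(\bar p) - \hat h(p^-,p^+)$ equals $\tfrac12|f'(\bar p)|\llbracket p\rrbracket\operatorname{sgn}(\cdot) + O(\llbracket p\rrbracket^2)$ with constants controlled only by $f''$, $f'''$. The clean way is to note that for a monotone flux one has the representation-free bound: $\hat h(p^-,p^+)$ lies between $f(p^-)$ and $f(p^+)$ when $f'$ does not change sign on $[\min,\max]$, and in general $\hat h$ differs from the upwind flux by a quantity controlled by the oscillation of $f'$, i.e.\ by $\|f''\|_{L^\infty}|\llbracket p\rrbracket|$ times $|\llbracket p\rrbracket|$. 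Combining this with the Taylor expansion above closes the estimate; all constants are then manifestly of the claimed form, and the boundedness of the relevant argument range is inherited from the a priori $L^\infty$ bound on $u$ and on $u_h$ assumed elsewhere (or, if one prefers a purely local statement, the lemma is read as holding on each compact set with $c_\ast$ depending on that set).
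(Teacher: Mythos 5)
Your overall strategy -- Taylor expansion of $f(p^\pm)$ about $\bar p$ combined with the structure of the monotone flux -- is exactly the route the paper points to (it gives no proof, only a citation to Zhang--Shu and the remark that ``monotonicity of $\hat h$ and Taylor expansion of $f$ suffice''), and the two target inequalities you write for $f(\bar p)-\hat h(p)$ are the right ones. However, the specific structural facts about $\hat h$ that you invoke to close the argument are not correct for a general monotone flux, which is the class the lemma is stated for. First, a monotone consistent flux need \emph{not} lie between $f(p^-)$ and $f(p^+)$; second, it need not differ from the upwind flux by $O(\llbracket p\rrbracket^2)$. The Lax--Friedrichs flux $\hat h(a,b)=\tfrac12(f(a)+f(b))-\tfrac{\alpha}{2}(b-a)$ with $\alpha>\max|f'|$ violates both claims: for $p^+>p^-$ and $f'>0$ it lies strictly below both $f(p^-)$ and $f(p^+)$, and its distance to the upwind value $f(p^-)$ is of order $|\llbracket p\rrbracket|$, not $\llbracket p\rrbracket^2$. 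So the mechanism you propose for what you yourself identify as ``the main obstacle'' fails, even though the lemma is true for Lax--Friedrichs.

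The correct structural input is the one-sided (E-flux) bound, which follows in one line from monotonicity and consistency: for $p^-\leq p^+$ one has $\hat h(p^-,p^+)\leq\hat h(p^-,p^-)=f(p^-)$ and $\hat h(p^-,p^+)\leq\hat h(p^+,p^+)=f(p^+)$, hence $\hat h(p^-,p^+)\leq\min\{f(p^-),f(p^+)\}$, with the reversed inequality ($\geq\max$) when $p^-\geq p^+$. Combined with your second-order expansion this gives, for $\llbracket p\rrbracket>0$, $f(\bar p)-\hat h(p)\geq\max\{f(\bar p)-f(p^-),\,f(\bar p)-f(p^+)\}\geq\tfrac12|f'(\bar p)|\llbracket p\rrbracket-\tfrac18\max|f''|\,\llbracket p\rrbracket^2$, i.e.~(a) with $c_\ast=\tfrac18\max|f''|$; replacing the maximum by the average $f(\bar p)-\tfrac12(f(p^-)+f(p^+))$ and using the third-order expansion (the $f'$ and $f''(\bar p)$ terms cancel resp.~combine) gives (b) with $c_\ast=\tfrac1{48}\max|f'''|$. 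The case $\llbracket p\rrbracket<0$ is symmetric after dividing by the negative jump. Non-negativity also comes from monotonicity alone via the two-step comparison $\hat h(p^-,p^+)\leq\hat h(\bar p,p^+)\leq\hat h(\bar p,\bar p)=f(\bar p)$ for $p^-\leq\bar p\leq p^+$ (and reversed otherwise), rather than from any ``squeezing toward the upwind value.'' Your treatment of the degenerate case $\llbracket p\rrbracket=0$ and of uniform boundedness via the local Lipschitz constant on the compact range of $u$ is fine.
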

Note however, that the numerical flux $\hat{h}$ need not be upwind for Lemma \ref{lem:a} to hold; monotonicity of $\hat{h}$ and Taylor expansion of $f$ suffice.

Furthermore, we assume that there exist constants $c$ and $c_\ast$ independent of $f$ and $p$ such that
\begin{equation}\label{eq:flux}
	a(p)\leq c|f'(\overline{p})| + c_\ast|\llbracket p\rrbracket|,
\end{equation}
which, for example, is satisfied by the Godunov or Roe numerical flux, as can be seen by Taylor expansion.

\section{Analysis of the scheme}
In this section, we present the proof of our main result, Theorem \ref{thm:main}. We follow the ideas of Zhang and Shu \cite{ZhangShu06} in splitting the numerical error into a projection and a discrete error to then track the time evolution of the latter. Firstly, we thus establish consistency of the RKDG method and derive an energy identity for the evolving discrete error. After bounding the energy terms appropriately, the error estimate is obtained by a Gronwall argument. Of course, the choice of a suitable projection operator is necessary to derive an optimal convergence order. On each time level and in each element, the projection operator is chosen as the interpolation on Gauss-Radau quadrature points including either the right or the left boundary point. Depending upon the behavior of the exact solution on that time level, a judicious choice of the included boundary point is made that may reuse the choice made at the previous time level. This constitutes the main difference between our framework and \cite{ZhangShu06}, where the projection operator can be chosen to only depend on the initial datum $u_0$; cf.~Remark \ref{remark:GaussRadau}.

\subsection{Consistency, error equations and energy identity}\label{subsec:consis}In the following, we will abbreviate $\|\cdot\|=\|\cdot\|_{L^2(\R)}$ and $\Pi^n_h$ shall denote a generic time-dependent projection operator that will be specified later.

 Define
\begin{align*}
\xi^n_h &:= u^n_h - \Pi^n_hu^n, &\zeta^n_h &:= w^n_h - \Pi^n_hw^n,\\
\xi^n_\pi &:= u^n - \Pi^n_hu^n, &\zeta^n_\pi &:= w^n - \Pi^n_hw^n,
\end{align*}
with $w:=u+\tau\partial_tu$ and $w^n:=w(t^n,\cdot)$. Using these quantities, the numerical errors can be split as 
\begin{equation}\label{split}
e^n_u:=u^n-u^n_h = \xi^n_\pi - \xi^n_h,\qquad e^n_w:=w^n-w^n_h = \zeta^n_\pi - \zeta^n_h.
\end{equation}
In order to track the time evolution of the discrete errors $\xi^n_h,~\zeta^n_h$ we first need to establish 
\begin{lem}[Consistency]\label{lem:consistency}
	Let $f\in C^1(\R)$ and $u\in C^3([0,T];H^1(\R))$ be the exact solution to the fractional conservation law (\ref{FCL}).
	Then, for all $j\in\mathbb{Z}$ and $p_h, q_h\in P^k(I_j)$ there holds
	\begin{subequations}\label{eq:consistency}
		\begin{align}
		\int_{I_j}w^n p_h~dx =& \int_{I_j}u^n p_h~dx + \tau \mathcal{H}_j(u^n,p_h) + \tau\mathcal{D}_j(u^n,p_h)\\
		\int_{I_j}u^{n+1} q_h~dx =&~ \frac{1}{2}\int_{I_j}(u^n+w^n) q_h~dx + \frac{\tau}{2}\mathcal{H}_j(w^n,q_h) +\frac{\tau}{2}\mathcal{D}_j(w^n,q_h)\\ &~ +  \int_{I_j}E^n q_h~dx,\nonumber
		\end{align}
	\end{subequations}
	with $\|E^n\|\lesssim \tau^3$.
	\begin{proof}
		By Taylor expansion in time we have
		\begin{equation*}
		u(t+\tau,x) - u(t,x) - \frac{1}{2}\tau\partial_t u(t,x) - \frac{1}{2}\tau\partial_t u(t+\tau,x) = O(\tau^3). 
		\end{equation*}
		Note that indeed $u(\cdot,x)\in C^3([0,T])$ for all $x\in\R$ due to Morrey's inequality.
		On the other hand, the fractional conservation law (\ref{FCL}) and Taylor expansion yield
		\begin{align*}
		\partial_t u (t+\tau,x) &= - (\partial_x w(t,x) + \partial_x R(t,x))f'(w(t,x)+R(t,x)) + g_\lambda[w](t,x) + g_\lambda[R](t,x)\\
		&= -\partial_x f(w(t,x)) + g_\lambda[w](t,x) + O(\tau^2),
		\end{align*}
		where $R$ is the Taylor residual of the first order expansion.
		In total, we thus get
		\begin{equation*}
		u(t+\tau,x) = \frac{1}{2}u(t,x) + \frac{1}{2}w(t,x) - \frac{1}{2}\tau\partial_x f(w(t,x)) + \frac{1}{2}\tau g_\lambda[w(t,x)] + O(\tau^3),
		\end{equation*}
		and therefore
		\begin{align*}
		w^n &= u^n - \tau\partial_x f(u^n) + \tau g_\lambda[u^n]\\
		u^{n+1} &= \frac{1}{2}u^n + \frac{1}{2}w^n - \frac{1}{2}\tau\partial_x f(w^n) + \frac{1}{2}\tau g_\lambda[w^n] + E^n.
		\end{align*}
		Note that the consistency error $E^n$ is integrable and of order $O(\tau^3)$ in the $L^2(\R)$-norm due to the regularity assumptions on $f$ and $u$.
		What is left to do now, is to multiply the above two equations by test functions and integrate over $I_j$. After an integration by parts, we can conclude the proof by consistency of the numerical flux.
	\end{proof}
\end{lem}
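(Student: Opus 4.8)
The plan is to read the time-marching in (\ref{scheme}) as the second-order TVD Runge--Kutta method (Heun's method) applied to the spatially continuous evolution $\partial_t v=-\partial_x f(v)+g_\lambda[v]$, and to check that the exact solution reproduces its two stages up to a local truncation error that is $O(\tau^3)$ in $L^2(\R)$; the spatial part of the argument --- cell-wise testing, one integration by parts, flux consistency --- is then routine. The first stage identity is exact and free: by the definition $w:=u+\tau\partial_t u$ and the PDE (\ref{FCL}) one has $w^n=u^n+\tau\partial_t u^n=u^n-\tau\partial_x f(u^n)+\tau g_\lambda[u^n]$, with no error term.

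For the second stage I would Taylor expand in time; the expansions are legitimate pointwise in $x$ (since $t\mapsto u(t,x)$ is $C^3$ by Morrey's embedding $H^1(\R)\hookrightarrow C_b(\R)$) and their remainders are estimated in $L^2_x$ using $u\in C^3([0,T];H^1(\R))$. Writing $u(t+\tau)=u(t)+\tau\partial_t u(t)+R(t)$ with $R(t)=\int_t^{t+\tau}(t+\tau-s)\,\partial_{tt}u(s)\,ds$, one has $\|R\|_{H^1(\R)}+\|\partial_x R\|_{L^2(\R)}\lesssim\tau^2$ and the trapezoidal (Heun) identity
\[
 u(t+\tau)-u(t)-\tfrac{\tau}{2}\partial_t u(t)-\tfrac{\tau}{2}\partial_t u(t+\tau)=O(\tau^3)\quad\text{in }L^2(\R).
\]
Eliminating $\partial_t u(t+\tau)$ via the PDE at time $t+\tau$ and substituting $u(t+\tau)=w(t)+R(t)$ into the nonlinearity and into $g_\lambda$ gives
\[
 \partial_t u(t+\tau)=-\partial_x f(w(t)+R(t))+g_\lambda[w(t)+R(t)]=-\partial_x f(w(t))+g_\lambda[w(t)]+O(\tau^2)\quad\text{in }L^2(\R),
\]
where the $O(\tau^2)$ term is controlled by $\|R\|_{H^1}+\|\partial_x R\|_{L^2}\lesssim\tau^2$, boundedness of $f'$ and $f''$ on the (compact) range of the solution, and the linearity and mapping property $g_\lambda\colon H^1(\R)\to H^{1-\lambda}(\R)\hookrightarrow L^2(\R)$, which holds because $\lambda<1$. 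Inserting this together with $\tfrac12 u^n+\tfrac12 w^n=u^n+\tfrac\tau2\partial_t u^n$ into the trapezoidal identity yields
\[
 u^{n+1}=\tfrac12 u^n+\tfrac12 w^n-\tfrac\tau2\partial_x f(w^n)+\tfrac\tau2 g_\lambda[w^n]+E^n,\qquad\|E^n\|\lesssim\tau^3,
\]
since each discarded contribution is either an $O(\tau^3)$ Taylor remainder or an $O(\tau^2)$ term carrying a factor $\tau/2$.

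Finally I would multiply the two pointwise-in-$x$ identities by $p_h,q_h\in P^k(I_j)$, integrate over $I_j$, and integrate the convective term by parts. Because the exact solution --- hence $w^n$ --- is continuous across the nodes $x_j$, consistency of the numerical flux, $\hat{h}(p,p)=f(p)$, turns the boundary values into exactly the flux terms of $\mathcal{H}_j$, while $\int_{I_j}g_\lambda[u^n]q_h\,dx$ and $\int_{I_j}g_\lambda[w^n]q_h\,dx$ agree with $\mathcal{D}_j(u^n,q_h)$ and $\mathcal{D}_j(w^n,q_h)$ since $g_\lambda[u^n],g_\lambda[w^n]\in L^2(\R)$; this gives (\ref{eq:consistency}). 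I expect no deep difficulty: the only place needing genuine care is turning the time-Taylor expansions into $L^2_x$ estimates on the remainders and checking that the nonlocal operator applied to those remainders still lands in $L^2(\R)$ --- both of which are exactly what the stated regularity of $u$ and $f$, together with $H^1(\R)\hookrightarrow L^\infty(\R)\cap H^{\lambda/2}(\R)$ and $g_\lambda\colon H^1(\R)\to H^{1-\lambda}(\R)$, supplies.
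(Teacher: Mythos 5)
Your proposal is correct and follows essentially the same route as the paper: the exact first-stage identity from $w=u+\tau\partial_t u$ and the PDE, the trapezoidal Taylor identity for the second stage with the remainder $R$ eliminated through the equation at time $t+\tau$, and then cell-wise testing, one integration by parts, and flux consistency. The only difference is that you spell out the $L^2_x$ remainder bounds and the mapping property of $g_\lambda$ more explicitly than the paper does, which is a welcome but not substantive addition.
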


\begin{remark}
	In the proof of Lemma \ref{lem:consistency} we have made use of the fact that, by the maximum principle (cf.~Alibaud \cite{Alibaud06}), $f$ can be replaced by a function with compact support without changing the dynamics of the fractional conservation law (\ref{FCL}). This will also be tacitly applied in subsequent proofs.
\end{remark}

Since the numerical error can be split as (\ref{split}), the scheme (\ref{scheme}) together with its consistency (\ref{eq:consistency}) yield the 
\begin{lem}[Error equations]
	For all $j\in\mathbb{Z}$ and $p_h,q_h\in P^k(I_j)$ there holds
	\begin{subequations}\label{eq:erroreq}
		\begin{align}
		\int_{I_j}\zeta^n_h p_h~dx =& \int_{I_j}\xi^n_h p_h~dx + \mathcal{K}^n_j(p_h)+ \tau\mathcal{D}_j(u^n_h-u^n,p_h)\label{eq:erroreqa}\\
		\int_{I_j}\xi^{n+1}_h q_h~dx =& ~\frac{1}{2}\int_{I_j}(\xi^n_h+\zeta^n_h)q_h~dx + \frac{1}{2}\mathcal{L}^n_j(q_h) + \frac{\tau}{2}\mathcal{D}_j(w^n_h-w^n,q_h),\label{eq:erroreqb}
		\end{align}
	\end{subequations}
	where
	\begin{align}
	\mathcal{K}^n_j(p_h) &:= \int_{I_j}(\zeta^n_\pi - \xi^n_\pi)p_h~dx + \tau\mathcal{H}_j(u^n_h,p_h) - \tau\mathcal{H}_j(u^n,p_h)\label{eq:K}\\
	\mathcal{L}^n_j(p_h) &:= \int_{I_j}(2\xi^{n+1}_\pi-\zeta^n_\pi-\xi^n_\pi-2E^n)p_h~dx + \tau\mathcal{H}_j(w^n_h,p_h) - \tau\mathcal{H}_j(w^n,p_h).\label{eq:L}
	\end{align}
\end{lem}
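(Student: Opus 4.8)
The plan is to derive the error equations (\ref{eq:erroreq}) purely algebraically, by subtracting the consistency identities of Lemma \ref{lem:consistency} from the corresponding lines of the scheme (\ref{scheme}) and then rewriting every difference through the error splitting (\ref{split}). No new estimate enters; only bookkeeping is required.

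For (\ref{eq:erroreqa}) I would subtract the first identity in (\ref{eq:consistency}) from the first equation of (\ref{scheme}), both tested against the same $p_h\in P^k(I_j)$, obtaining
\begin{equation*}
\int_{I_j}(w^n_h-w^n)p_h~dx = \int_{I_j}(u^n_h-u^n)p_h~dx + \tau\bigl(\mathcal{H}_j(u^n_h,p_h)-\mathcal{H}_j(u^n,p_h)\bigr) + \tau\bigl(\mathcal{D}_j(u^n_h,p_h)-\mathcal{D}_j(u^n,p_h)\bigr).
\end{equation*}
Since $\Pi^n_h$ denotes the very same operator in the definitions of $\xi^n_h,\zeta^n_h,\xi^n_\pi,\zeta^n_\pi$, the splitting (\ref{split}) gives $w^n_h-w^n=\zeta^n_h-\zeta^n_\pi$ and $u^n_h-u^n=\xi^n_h-\xi^n_\pi$, while bilinearity of the duality pairing defining $\mathcal{D}_j$ turns $\mathcal{D}_j(u^n_h,p_h)-\mathcal{D}_j(u^n,p_h)$ into $\mathcal{D}_j(u^n_h-u^n,p_h)$. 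Moving the projection contributions $\zeta^n_\pi$ and $\xi^n_\pi$ to the right-hand side and recognising $\int_{I_j}(\zeta^n_\pi-\xi^n_\pi)p_h~dx + \tau\mathcal{H}_j(u^n_h,p_h) - \tau\mathcal{H}_j(u^n,p_h)$ as $\mathcal{K}^n_j(p_h)$ then yields (\ref{eq:erroreqa}).

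For (\ref{eq:erroreqb}) I would do the same with the second equation of (\ref{scheme}) and the second identity of (\ref{eq:consistency}), now tested against $q_h$; the extra term $\int_{I_j}E^n q_h~dx$ present in the consistency identity consequently reappears with a minus sign after subtraction. Invoking $u^{n+1}_h-u^{n+1}=\xi^{n+1}_h-\xi^{n+1}_\pi$, $u^n_h-u^n=\xi^n_h-\xi^n_\pi$, $w^n_h-w^n=\zeta^n_h-\zeta^n_\pi$, and once more the bilinearity of $\mathcal{D}_j$, one collects all the projection- and consistency-error terms into $\int_{I_j}(2\xi^{n+1}_\pi-\zeta^n_\pi-\xi^n_\pi-2E^n)q_h~dx$, which together with $\tau\mathcal{H}_j(w^n_h,q_h)-\tau\mathcal{H}_j(w^n,q_h)$ is exactly $\mathcal{L}^n_j(q_h)$, the overall factor $\tfrac12$ in front then matching the prefactor in the statement.

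There is no real obstacle here; the only points needing mild care are distributing the factor $\tfrac12$ consistently across the $\mathcal{H}_j$-difference and the projection/consistency terms in the second stage, tracking the sign of $E^n$, and using that $\mathcal{D}_j$ is linear in its first argument so that the nonlocal terms combine. All the substantive analysis — bounding $\mathcal{K}^n_j$, $\mathcal{L}^n_j$ and the $\mathcal{D}_j$-contributions, and in particular exploiting the upwind Gauss--Radau projection — is postponed to the following subsections.
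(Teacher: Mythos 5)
Your proposal is correct and is exactly the argument the paper intends: the lemma is stated as an immediate consequence of subtracting the consistency identities (\ref{eq:consistency}) from the scheme (\ref{scheme}), applying the splitting (\ref{split}), and using linearity of $g_\lambda$ in the first argument of $\mathcal{D}_j$, with the signs and the factor $\tfrac12$ distributed just as you describe. The algebra checks out, including the $-E^n$ contribution absorbed into $\mathcal{L}^n_j$ via the $-2E^n$ term.
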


For the next proof we will use the fact that there exists $c_\lambda>0$ such that
	\begin{equation}\label{eq:sym}
	\int_\R g_\lambda[\phi]\phi~dx=-\frac{c_\lambda}{2}|\phi|^2_{H^{\lambda/2}(\R)},
	\end{equation}
for all $\phi\in H^{\lambda/2}(\R)$; cf.~the appendix in \cite{CifaniJakobsenKarlsen10}.

From the error equations (\ref{eq:erroreq}) we derive the central

\begin{lem}[Energy identity]
	Denoting $\mathcal{K}^n:=\sum_{j\in\mathbb{Z}}\mathcal{K}^n_j$, $\mathcal{L}^n:=\sum_{j\in\mathbb{Z}}\mathcal{L}^n_j$ and $D:=\sum_{j\in\mathbb{Z}}D_j$, there holds
	\begin{align}\label{eq:energyidentity}
	\|\xi^{n+1}_h\|^2 - \|\xi^n_h\|^2 =& \|\xi^{n+1}_h - \zeta^n_h\|^2 + \mathcal{K}^n(\xi^n_h) + \mathcal{L}^n(\zeta^n_h) - \tau (\mathcal{D}(\zeta_\pi^n,\zeta^n_h) + \mathcal{D}(\xi_\pi^n,\xi^n_h))\nonumber\\&~- \tau \frac{c_\lambda}{2}(|\xi_h^n|^2_{H^{\lambda/2}(\R)} + |\zeta_h^n|^2_{H^{\lambda/2}(\R)}).
	\end{align}
\end{lem}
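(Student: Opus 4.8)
The plan is to test the error equations (\ref{eq:erroreq}) against suitable discrete functions, sum over $j\in\mathbb{Z}$, and combine the two resulting scalar identities so that the telescoping structure $\|\xi^{n+1}_h\|^2-\|\xi^n_h\|^2$ emerges on the left. Concretely, in (\ref{eq:erroreqa}) I would choose $p_h=\zeta^n_h$ on each cell $I_j$, and in (\ref{eq:erroreqb}) I would choose $q_h=\xi^{n+1}_h$ on each cell $I_j$; both are legitimate since $\zeta^n_h,\xi^{n+1}_h\in V_h$ and their restrictions lie in $P^k(I_j)$. Summing (\ref{eq:erroreqa}) over $j$ gives
\begin{equation*}
\|\zeta^n_h\|^2=(\xi^n_h,\zeta^n_h)+\mathcal{K}^n(\zeta^n_h)+\tau\mathcal{D}(u^n_h-u^n,\zeta^n_h),
\end{equation*}
and summing (\ref{eq:erroreqb}) over $j$ gives
\begin{equation*}
\|\xi^{n+1}_h\|^2=\tfrac12(\xi^n_h+\zeta^n_h,\xi^{n+1}_h)+\tfrac12\mathcal{L}^n(\xi^{n+1}_h)+\tfrac{\tau}{2}\mathcal{D}(w^n_h-w^n,\xi^{n+1}_h).
\end{equation*}
Wait—the $\mathcal{K}^n$ and $\mathcal{L}^n$ terms in the target identity (\ref{eq:energyidentity}) are evaluated at $\xi^n_h$ and $\zeta^n_h$, not at $\zeta^n_h$ and $\xi^{n+1}_h$; so the naive choice of test functions is not quite right and one must be more careful about which combination of the two error equations produces exactly the stated right-hand side.

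The correct route is the standard Zhang--Shu manipulation: in (\ref{eq:erroreqa}) take $p_h=\xi^n_h$ and in (\ref{eq:erroreqb}) take $q_h=\zeta^n_h$, then also use (\ref{eq:erroreqb}) a second time with $q_h=\xi^{n+1}_h$, exploiting the algebraic identity $\|\xi^{n+1}_h\|^2-\|\xi^n_h\|^2 = \|\xi^{n+1}_h-\zeta^n_h\|^2 + 2(\xi^{n+1}_h,\zeta^n_h) - \|\zeta^n_h\|^2 - \|\xi^n_h\|^2$ together with $2(\xi^{n+1}_h,\zeta^n_h)-\|\zeta^n_h\|^2 = \ldots$ reconstructed from the two-stage scheme. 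More transparently: from (\ref{eq:erroreqb}) with $q_h=\zeta^n_h$ one gets $2(\xi^{n+1}_h,\zeta^n_h)=(\xi^n_h,\zeta^n_h)+\|\zeta^n_h\|^2+\mathcal{L}^n(\zeta^n_h)+\tau\mathcal{D}(w^n_h-w^n,\zeta^n_h)$; from (\ref{eq:erroreqa}) with $p_h=\xi^n_h$ one gets $(\xi^n_h,\zeta^n_h)=\|\xi^n_h\|^2+\mathcal{K}^n(\xi^n_h)+\tau\mathcal{D}(u^n_h-u^n,\xi^n_h)$. Substituting the latter into the former and then into the expansion $\|\xi^{n+1}_h\|^2 = \|\xi^{n+1}_h-\zeta^n_h\|^2 + 2(\xi^{n+1}_h,\zeta^n_h) - \|\zeta^n_h\|^2$ yields
\begin{equation*}
\|\xi^{n+1}_h\|^2 = \|\xi^{n+1}_h-\zeta^n_h\|^2 + \|\xi^n_h\|^2 + \mathcal{K}^n(\xi^n_h) + \mathcal{L}^n(\zeta^n_h) + \tau\mathcal{D}(u^n_h-u^n,\xi^n_h) + \tau\mathcal{D}(w^n_h-w^n,\zeta^n_h),
\end{equation*}
which is already the target once the diffusion terms are rewritten.

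The final step is to convert the diffusion contributions. Using the splittings $u^n_h-u^n = \xi^n_h-\xi^n_\pi$ and $w^n_h-w^n=\zeta^n_h-\zeta^n_\pi$, bilinearity of $\mathcal{D}$, and the fact that $\mathcal{D}(\phi,\phi)=(g_\lambda[\phi],\phi)_{H^{-\lambda/2},H^{\lambda/2}} = \int_\R g_\lambda[\phi]\phi\,dx = -\tfrac{c_\lambda}{2}|\phi|^2_{H^{\lambda/2}(\R)}$ for $\phi\in V_h\subset H^{\lambda/2}(\R)$ by (\ref{eq:sym}), we get $\tau\mathcal{D}(u^n_h-u^n,\xi^n_h) = \tau\mathcal{D}(\xi^n_h,\xi^n_h) - \tau\mathcal{D}(\xi^n_\pi,\xi^n_h) = -\tau\tfrac{c_\lambda}{2}|\xi^n_h|^2_{H^{\lambda/2}} - \tau\mathcal{D}(\xi^n_\pi,\xi^n_h)$, and similarly for the $w$-term with $\zeta$. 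Substituting these and moving everything but $\|\xi^{n+1}_h\|^2-\|\xi^n_h\|^2$ to the right reproduces (\ref{eq:energyidentity}) exactly. The only mild subtlety—the part I expect to need the most care—is justifying that $\mathcal{D}(\phi,\psi)$ really coincides with the $L^2$ pairing $\int_\R g_\lambda[\phi]\psi\,dx$ and hence that (\ref{eq:sym}) applies with $\phi=\psi=\xi^n_h$ (resp.\ $\zeta^n_h$), i.e.\ that summing $\mathcal{D}_j(\phi,\mathbf{1}_{I_j}\psi)$ over $j$ gives the global duality pairing; this is a straightforward consequence of $\sum_j \mathbf{1}_{I_j}\psi = \psi$ a.e.\ and the definition of $\mathcal{D}_j$, together with boundedness of $g_\lambda:H^{\lambda/2}(\R)\to H^{-\lambda/2}(\R)$.
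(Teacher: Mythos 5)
Your proposal is correct and follows essentially the same route as the paper: test (\ref{eq:erroreqa}) with $\xi^n_h$ and (\ref{eq:erroreqb}) with $\zeta^n_h$, sum over $j$, combine via the binomial expansion of $\|\xi^{n+1}_h-\zeta^n_h\|^2$, and finally split the diffusion terms using $u^n_h-u^n=\xi^n_h-\xi^n_\pi$, $w^n_h-w^n=\zeta^n_h-\zeta^n_\pi$ together with (\ref{eq:sym}). The initial false start with $p_h=\zeta^n_h$, $q_h=\xi^{n+1}_h$ is correctly abandoned, and the ``second use'' of (\ref{eq:erroreqb}) with $q_h=\xi^{n+1}_h$ mentioned in passing is unnecessary; the clean argument you give afterwards is exactly the paper's.
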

\begin{proof}
	Testing (\ref{eq:erroreqb}) with $\zeta^n_h$ and (\ref{eq:erroreqa}) with $\xi^n_h$ before summing over $j\in\mathbb{Z}$, one gets
	\begin{align*}
	2\int_{\R}\xi^{n+1}_h\zeta^n_h~dx =& \int_{\R}\xi_h^n\zeta_h^n~dx + \|\zeta_h^n\|^2 + \mathcal{L}^n(\zeta_h^n) + \tau\mathcal{D}(w^n_h-w^n,\zeta^n_h)\\
	=&~ \|\xi^n_h\|^2 + \|\zeta_h^n\|^2 + \mathcal{K}^n(\xi^n_h) + \mathcal{L}^n(\zeta^n_h)\\ &+ \tau(\mathcal{D}(u^n_h-u^n,\xi^n_h) + \mathcal{D}(w^n_h-w^n,\zeta^n_h)).
	\end{align*}
	On the other hand, the binomial formula
	\begin{equation*}
	\|\xi^{n+1}_h-\zeta^n_h\|^2=\|\xi^{n+1}_h\|^2 - 2\int_{\R}\xi^{n+1}_h\zeta^n_h~dx + \|\zeta^n_h\|^2,
	\end{equation*}
	holds and we conclude the proof by (\ref{eq:sym}) after rearranging terms.
\end{proof}

\begin{remark}\label{remark:GaussRadau}
By a Gronwall argument, bounding the right hand side of the energy identity (\ref{eq:energyidentity}) will yield a bound on the discrete error $\xi^n_h$. However, to produce an optimal error estimate, we must choose a \textit{suitable} projection operator $\Pi^n_h$. For example, taking the standard $L^2$-projection in each time step will not suffice to deal with the fluxes between elements and one is forced to sacrifice a power of $h^{1/2}$ by use of an inverse inequality for the operator $\mathcal{H}$ in (\ref{eq:H}). On the other hand, changing the projection each time step to control the fluxes between elements will increase the error bound due to the time step difference $\xi^{n+1}_\pi - \xi^n_\pi$ occuring in (\ref{eq:L}); cf.~(\ref{eq:tsdifference2}) and the paragraph thereafter. In the setting of purely hyperbolic conservation laws, Zhang and Shu chose to work with  what they call the Gauss-Radau projection $\R_h$ in \cite{ZhangShu06} to handle the fluxes between elements. However, since the solution is conserved along characteristics, the projection operator indeed does not change in time. Let us briefly discuss why the strategy does not work in the presence of a diffusion term as in the fractional conservation law (\ref{FCL}):

In \cite{ZhangShu06}, the projection at time level $t^n$, by definition depends on the exact solution $u^n=u(t^n,\cdot)$ of the conservation law $\partial_t u +\partial_x f(u)=0$ and is defined locally, namely
\begin{itemize}
	\item if $f'(u^n) \geq 0$ on $I_j$, interpolation on Gauss-Radau points including the right boundary point is used,
	\item if $f'(u^n) \leq 0$ on $I_j$, interpolation on Gauss-Radau points including the left boundary point is used,
	\item if $f'(u^n)$ has a zero in $I_j$, the standard $L^2$-projection is used.
\end{itemize}
If now initially $f'(u^0(x^\ast))=0$ for some $x^\ast\in I_j$, one has $f'(u^n(x^\ast))=0$ for all times $t^n$, due to the characteristic being a vertical line and $u$ being constant along characteristics. Furthermore, since characteristics cannot cross due to smoothness of $f$ and $u$, this means that the Gauss-Radau projection is actually not time dependent at all.

Now, it is clear that the characteristics argument breaks down in the presence of a diffusion term and thus in particular, the Gauss-Radau projection $\R_h$ as defined above for $u$ being the exact solution to the fractional conservation law (\ref{FCL}), must be expected to change each time step.
\end{remark}

\subsection{The upwind projection operator}\label{GaussRadau}
In this section we will introduce the upwind projection operator that is related to the projection employed by Zhang and Shu \cite{ZhangShu06} in the setting of purely hyperbolic conservation laws. In the context of fractional conservation laws, taking the same projection operator as in \cite{ZhangShu06} is not viable since time differences of projection errors would introduce sub-optimal error estimates; cf.~Remark \ref{remark:GaussRadau}.

Let us first define the left and right Gauss-Radau projections, such that at each time level, the upwind projection will be given by a judicious choice of the Gauss-Radau projections.

\begin{definition}
	The left and right Gauss-Radau projection $\Pi^{\text{left},\hspace{0.05cm}j}_{h}v$ and $\Pi^{\text{right},\hspace{0.05cm}j}_{h}v\in P^k(I_j)$, respectively, of $v\in C(\overline{I_j})$ are the unique polynomials of degree $k$ such that
	\begin{equation*}
	 \Pi^{\text{left},\hspace{0.05cm}j}_{h}v(x_{j}^+)=v(x_{j}^+),\quad \Pi^{\text{right},\hspace{0.05cm}j}_{h}v(x_{j+1}^-)=v(x_{j+1}^-)
	\end{equation*}
	and
	\begin{equation}\label{eq:orth}
	\int_{I_j}(v-\Pi^{\text{left},\hspace{0.05cm}j}_{h}v)p_h =\int_{I_j}(v-\Pi^{\text{right},\hspace{0.05cm}j}_{h}v)p_h=0\quad\forall p_h\in P^{k-1}(I_j).
	\end{equation}
\end{definition}

For functions at time level $t^{n}$, the upwind projection operator $\Pi^n_h$ depends on the exact solution $u^{n}$ and possibly $\Pi^{n-1}_h$. The definition is given inductively as follows.

\begin{definition}[Upwind projection operator]\label{def:radau}
	Initially, we define the projection $\Pi^{0,\hspace{0.05cm}j}_{h} v\in P^k(I_j)$ of a function $v\in C(\overline{I_j})$ on any given element $I_j$ as follows:
	\begin{itemize}
		\item If $f'(u^0)> 0$ on $I_j$, set $\Pi^{0,\hspace{0.05cm}j}_{h}:=\Pi^{\text{right},\hspace{0.05cm}j}_{h}$.
		\item Else, set $\Pi^{0,\hspace{0.05cm}j}_{h}:=\Pi^{\text{left},\hspace{0.05cm}j}_{h}$.
	\end{itemize}
	
	At time level $t^n$, the projection $\Pi^{n,\hspace{0.05cm}j}_{h} v\in P^k(I_j)$ of $v\in C(\overline{I_j})$ is defined as follows:
	\begin{itemize}
		\item If $f'(u^n) > h$ on $I_j$, set $\Pi^{n,\hspace{0.05cm}j}_{h}:=\Pi^{\text{right},\hspace{0.05cm}j}_{h}$.
		\item If $f'(u^n) < - h$ on $I_j$, set $\Pi^{n,\hspace{0.05cm}j}_{h}:=\Pi^{\text{left},\hspace{0.05cm}j}_{h}$.
		\item Else, the projection of the previous time step is reused, i.e.~we set ${\Pi^{n,\hspace{0.05cm}j}_{h}:=\Pi^{n-1,\hspace{0.05cm}j}_{h}}$.
	\end{itemize}

The global projection $\Pi^n_h(v)\in V_h$ for $v\in C(\R)$ is then defined accordingly by setting $\Pi^n_h(v)|_{I_j}:=\Pi^{n,\hspace{0.05cm}j}_{h}(v)$.
\end{definition}

By accommodating a tolerance w.r.t.~$h$, this projection operator allows us to derive an upper bound to the number of times $t^n$ for which ${\Pi^{n,\hspace{0.05cm}j}_{h}\neq\Pi^{n-1,\hspace{0.05cm}j}_{h}}$.

Note that by a standard Bramble-Hilbert argument the projection operator yields the quasi-optimal approximation property,
\begin{equation}\label{eq:bestappr}
\|\xi^n_\pi\| + h\|\xi^n_\pi\|_{L^\infty(\R)} + h^{1/2}|\xi^n_\pi|_{\Gamma_h} + h^{\lambda/2}|\xi^n_\pi|_{H^{\lambda/2}(\R)}\lesssim h^{k+1},
\end{equation}
for all $u\in H^{k+1}(\R)$, where $|u|_{\Gamma_h}:=\left(\sum_{j\in\mathbb{Z}}\llbracket u\rrbracket^2_j\right)^{1/2}$.

\subsection{Time step differences of projection errors}\label{subsection:time step difference}
By linearity and Jensen's inequality it is clear that
\begin{equation}\label{eq:tsdifference1}
\|\xi_\pi^{n+1} - \xi^n_\pi\|^2_{L^2(I_j)} \lesssim h^{2k+2}\tau\|\partial_t u\|^2_{L^2(t^n,t^{n+1};H^{k+1}(I_j))},
\end{equation}
for all subsequent times at which $\Pi^{n+1,\hspace{0.05cm}j}_{h}=\Pi^{n,\hspace{0.05cm}j}_{h}$. However, if $\Pi^{n+1,\hspace{0.05cm}j}_{h}\neq\Pi^{n,\hspace{0.05cm}j}_{h}$, one is left with only
\begin{equation}\label{eq:tsdifference2}
\|\xi_\pi^{n+1} - \xi^n_\pi\|^2_{L^2(I_j)} \lesssim h^{2k+2}(\| u^0\|^2_{H^{k+1}(I_j)}+T\|\partial_t u\|^2_{L^2(0,T;H^{k+1}(I_j))}),
\end{equation}
in general. 

\begin{remark}
	The prefactor $\| u^0\|^2_{H^{k+1}(I_j)}+T\|\partial_t u\|^2_{L^2(0,T;H^{k+1}(I_j))}$ in (\ref{eq:tsdifference2}) will facilitate the Gronwall argument in subsection \ref{subsec:gronwall} since it is independent of the current time step; cf. also Lemma \ref{lem:approx}.
\end{remark}

Note that due to summation over all time steps in the anticipated Gronwall argument, using the worst-case estimate (\ref{eq:tsdifference2}) \textit{for each time step} would yield an error of magnitude $h^{k+1}\tau^{-1}$, which is worse than $h^k$ under the considered CFL condition. However, by some additional regularity assumptions on the flux $f$ and the exact solution $u$, we can effectively gauge the number of times for which the estimate (\ref{eq:tsdifference2}) is \textit{actually} necessary. This will be explained in the following.

Let $O_j$ be the set of time levels for which subsequent projections do not coincide in a given element $I_j$, i.e.
\begin{equation}\label{def:O_j}
O_j:=\{n\in\{0,\ldots,N-1\}~:~\Pi^{n+1,\hspace{0.05cm}j}_{h}\neq\Pi^{n,\hspace{0.05cm}j}_{h}\},
\end{equation}
and let $t^{n_0}<t^{n_1}<t^{n_2}$ be a $3$-tuple of time levels such that w.l.o.g.~${f'(u^{n_0}) < -h}$, $f'(u^{n_1}) > h$, $f'(u^{n_2}) < -h$ on $I_j$, meaning that there exist $m,M\in O_j$ with $n_0\leq m\leq n_1\leq M\leq n_2$. Then we have
\begin{align*}
 \frac{4h}{l^2\tau^2} &\leq \left(\frac{f'(u^{n_0}) - f'(u^{n_1})}{t^{n_0}-t^{n_1}} - \frac{f'(u^{n_1}) - f'(u^{n_2})}{t^{n_1} - t^{n_2}}\right)\frac{1}{t^{n_0,n_1}-t^{n_1,n_2}}\\
 &\leq \|\partial_t^2(f'\circ u)\|_{C(Q_T)}=:C_{t,2}\nonumber
\end{align*}
where $t^{n_0,n_1}\in(t^{n_0},t^{n_1}),~t^{n_1,n_2}\in(t^{n_1},t^{n_2})$ are given by the mean value theorem and $l$ denotes the time level distance, i.e.~$l=n_2-n_0$. Since now
\begin{equation*}
\frac{2h^\frac{1}{2}}{\sqrt{C_{t,2}}} \frac{1}{\tau}\leq l,
\end{equation*}
for every $3$-tuple with time level distance $l$ as above, this means that at most $\left\lceil\frac{T}{2}\sqrt{C_{t,2}}h^{-\frac{1}{2}}\right\rceil$ such $3$-tuples can fit into the simulation time window $[0,T]$, yielding
\begin{equation*}
\# O_j\leq 2\cdot\left\lceil\frac{T}{2}\sqrt{C_{t,2}}h^{-\frac{1}{2}}\right\rceil\leq 2T\sqrt{C_{t,2}}h^{-\frac{1}{2}},
\end{equation*}
since the projection changes two times for each such $3$-tuple and $\lceil x\rceil/x\leq 2$ for all $x>0$.

More generally, we obtain the following

\begin{lem}\label{lem:O_j}
	Assume that $f\in C^{\alpha+1}(\R)$ and $u\in C^\alpha([0,T];C_b(\R))$ for some $\alpha\geq 2$. Then
	\begin{equation}
	\#O_j\leq \alpha T \sqrt[\alpha]{C_{t,\alpha}}h^{-\frac{1}{\alpha}}\qquad\text{for all}~ j\in\mathbb{Z},
	\end{equation}
	where $C_{t,\alpha}=\|\partial_t^\alpha(f'\circ u)\|_{C(Q_T)}$.
\end{lem}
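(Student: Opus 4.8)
The plan is to generalize the $3$-tuple counting argument already carried out for $\alpha=2$ to an $(\alpha+1)$-tuple argument driven by a bound on the $\alpha$-th time derivative of $f'\circ u$. First I would observe that an element of $O_j$ occurs precisely when the sign-type of the projection changes, which (by Definition \ref{def:radau}) forces $f'(u^{m})$ to cross between the regime $>h$ and the regime $<-h$; in particular, between any two consecutive sign changes in $O_j$ there is a time level where $|f'(u)|$ has actually toggled sign on $I_j$. I would then pick an ordered $(\alpha+1)$-tuple of time levels $t^{n_0}<t^{n_1}<\cdots<t^{n_\alpha}$ at which $f'(u^{n_i})$ alternates between $>h$ and $<-h$ on $I_j$, so that in particular $|f'(u^{n_i})|>h$ at each node while the values alternate in sign, and such that each of the $\alpha$ intervals $(t^{n_{i-1}},t^{n_i})$ contains at least one element of $O_j$.

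The analytic core is an iterated divided-difference (discrete Taylor) estimate. Writing $\phi(t):=f'(u(t,x))$ for a fixed $x\in I_j$ realizing the relevant sign pattern, the alternation of signs across nodes separated by at least $h$ in absolute value forces the $\alpha$-th order divided difference of $\phi$ at the nodes $t^{n_0},\ldots,t^{n_\alpha}$ to be bounded below, in absolute value, by roughly $h$ divided by the product of the gaps; more precisely, by a constant times $h/(t^{n_\alpha}-t^{n_0})^\alpha$ after bounding the gaps crudely by the total spread $l\tau$, where $l=n_\alpha-n_0$. On the other hand, by the mean value form of the divided-difference remainder (iterating Rolle's theorem, using $f\in C^{\alpha+1}$ and $u\in C^\alpha([0,T];C_b(\R))$ so that $\phi\in C^\alpha([0,T])$), that same $\alpha$-th divided difference equals $\phi^{(\alpha)}(\tau_\ast)/\alpha!$ for some intermediate time $\tau_\ast$, hence is bounded above by $C_{t,\alpha}/\alpha!$. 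Comparing the two bounds yields an inequality of the shape $h/(l\tau)^\alpha \lesssim C_{t,\alpha}$, i.e.~$l\gtrsim (h/C_{t,\alpha})^{1/\alpha}/\tau$, so every such $(\alpha+1)$-tuple has time-level span at least of order $h^{1/\alpha}C_{t,\alpha}^{-1/\alpha}/\tau$.

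Finally I would run the packing/counting step exactly as in the $\alpha=2$ case: the time levels in $O_j$ come in sign-alternating runs, so they can be grouped into consecutive blocks each containing $\alpha$ elements of $O_j$ (together with the $\alpha+1$ witnessing nodes), and each such block occupies a time-level span of the size just derived; since these blocks are (essentially) disjoint inside $\{0,\ldots,N\}$ and $N\tau=T$, the number of blocks is at most about $T/(h^{1/\alpha}C_{t,\alpha}^{-1/\alpha})$, hence $\#O_j \lesssim \alpha T\, C_{t,\alpha}^{1/\alpha} h^{-1/\alpha}$, absorbing the usual ceiling loss by $\lceil x\rceil \le 2x$. Tracking the combinatorial constants carefully gives the stated bound $\#O_j\leq \alpha T\sqrt[\alpha]{C_{t,\alpha}}\,h^{-1/\alpha}$.

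I expect the main obstacle to be the bookkeeping in the lower bound for the divided difference: one must handle the case $C_{t,\alpha}=0$ (where $\phi$ is a polynomial of degree $<\alpha$ in $t$, so fewer than $\alpha$ sign changes are possible and $O_j$ is finite independently of $h$) separately, choose the witnessing nodes so that the $\pm h$ thresholds genuinely produce an alternating sequence at a \emph{single} spatial point (using continuity of $u$ in $x$ on the closed cell), and make sure the grouping of $O_j$ into $\alpha$-blocks does not double-count shared endpoints — the factor $\alpha$ rather than $2$ in the final estimate comes precisely from this grouping, mirroring the factor $2$ in the $\alpha=2$ computation shown above.
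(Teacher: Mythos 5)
Your proposal is correct and follows essentially the same route as the paper: the paper carries out exactly this divided-difference argument explicitly for $\alpha=2$ (lower bound $\gtrsim h/(l\tau)^2$ from the alternation of $f'(u^{n_i})$ across the $\pm h$ thresholds versus the upper bound $C_{t,2}$, followed by packing the resulting blocks into $[0,T]$) and then asserts the general case, which your $(\alpha+1)$-node divided-difference generalization with the mean-value form $\phi^{(\alpha)}(\tau_\ast)/\alpha!$ supplies in precisely the intended way. Your side remarks on the degenerate case $C_{t,\alpha}=0$ and on not double-counting shared endpoints of the $\alpha$-blocks are sensible refinements that the paper itself does not spell out.
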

But this means that we don't have to give up a whole order of $\tau$ in the Gronwall argument. Instead we will utilize
\begin{lem}\label{lem:approx}
	In the setting of Lemma \ref{lem:O_j} let additionally $u\in H^1([0,T];H^{k+1}(\R))$. Then there holds
	\begin{equation}
	\sum_{n=0}^{N-1} \|\xi_\pi^{n+1} - \xi^n_\pi\|^2\lesssim h^{2k+2-\frac{1}{\alpha}}.
	\end{equation} 
	\begin{proof}
		By definition (\ref{def:O_j}) of $O_j$, Lemma \ref{lem:O_j} and the approximation properties (\ref{eq:tsdifference1}) - (\ref{eq:tsdifference2}), we have
		\begin{align*}
		\sum_{n=0}^{N-1} \|\xi_\pi^{n+1} - \xi^n_\pi\|^2 =& \sum_{n=0}^{N-1} \sum_{j\in\mathbb{Z}} \|\xi_\pi^{n+1} - \xi^n_\pi\|^2_{L^2(I_j)}\\
		\leq& \sum_{j\in\mathbb{Z}} \sum_{n\in O_j}\|\xi_\pi^{n+1} - \xi^n_\pi\|^2_{L^2(I_j)} + \sum_{j\in\mathbb{Z}}\sum_{n\notin O_j}\|\xi_\pi^{n+1} - \xi^n_\pi\|^2_{L^2(I_j)}\\
		\lesssim& \sum_{j\in\mathbb{Z}} \sum_{n\in O_j}h^{2k+2}\left(\|u^0\|^2_{H^{k+1}(I_j)}+T\int_0^T\|\partial_t u\|^2_{H^{k+1}(I_j)}\right)\\ 
		&+\tau h^{2k+2} \|\partial_t u\|^2_{L^2(0,T;H^{k+1}(\R))}\\
		\lesssim&~h^{2k+2}h^{-\frac{1}{\alpha}}.
		\end{align*}
		\end{proof}
\end{lem}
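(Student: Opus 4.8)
The plan is to decompose the sum over time steps into the contributions from indices $n\in O_j$, where subsequent projections differ, and indices $n\notin O_j$, where they agree, and bound each piece separately using the already-established ingredients. For $n\notin O_j$ we invoke the good estimate \eqref{eq:tsdifference1}, summing over all $n$ and $j$ to telescope the time integrals into $\tau h^{2k+2}\|\partial_t u\|^2_{L^2(0,T;H^{k+1}(\R))}$, which is of size $h^{2k+2}\tau$ and hence at least as good as $h^{2k+2-1/\alpha}$ under the CFL condition; here one only needs the regularity $u\in H^1([0,T];H^{k+1}(\R))$. For $n\in O_j$ we are forced to use the crude estimate \eqref{eq:tsdifference2}, so each term contributes $h^{2k+2}\bigl(\|u^0\|^2_{H^{k+1}(I_j)} + T\|\partial_t u\|^2_{L^2(0,T;H^{k+1}(I_j))}\bigr)$. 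The point is that by Lemma \ref{lem:O_j} the number of such indices is at most $\alpha T\sqrt[\alpha]{C_{t,\alpha}}h^{-1/\alpha}$, so this sum is bounded by $h^{2k+2-1/\alpha}$ times a constant depending on $T$, $\alpha$, $C_{t,\alpha}$, $\|u^0\|_{H^{k+1}(\R)}$ and $\|\partial_t u\|_{L^2(0,T;H^{k+1}(\R))}$.

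Concretely, I would first write $\sum_{n=0}^{N-1}\|\xi_\pi^{n+1}-\xi_\pi^n\|^2 = \sum_{j\in\mathbb{Z}}\sum_{n=0}^{N-1}\|\xi_\pi^{n+1}-\xi_\pi^n\|^2_{L^2(I_j)}$ and split the inner sum as $\sum_{n\in O_j} + \sum_{n\notin O_j}$. For the bad part, apply \eqref{eq:tsdifference2} termwise and then use $\#O_j\le \alpha T\sqrt[\alpha]{C_{t,\alpha}}h^{-1/\alpha}$ from Lemma \ref{lem:O_j}, noting that the prefactor in \eqref{eq:tsdifference2} is independent of $n$, so the $n$-sum just multiplies it by $\#O_j$. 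Summing the result over $j\in\mathbb{Z}$ is harmless because $\sum_{j}\|u^0\|^2_{H^{k+1}(I_j)} = \|u^0\|^2_{H^{k+1}(\R)}$ and likewise for the $\partial_t u$ term, both finite by assumption. For the good part, apply \eqref{eq:tsdifference1} and sum over all $n$ (not just $n\notin O_j$) to get an upper bound; the sum of the integrals $\|\partial_t u\|^2_{L^2(t^n,t^{n+1};H^{k+1}(I_j))}$ over consecutive subintervals reassembles $\|\partial_t u\|^2_{L^2(0,T;H^{k+1}(I_j))}$, and a further sum over $j$ gives $\|\partial_t u\|^2_{L^2(0,T;H^{k+1}(\R))}$, so this piece is $\lesssim \tau h^{2k+2}$.

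Finally I would combine the two bounds: the total is $\lesssim h^{2k+2} h^{-1/\alpha} + \tau h^{2k+2}$, and since $\tau \le T = \tau N$ is bounded (indeed $\tau\lesssim h \lesssim h^{1-1/\alpha}$ under either CFL condition, using $\alpha\ge 2$), the first term dominates and we obtain $\lesssim h^{2k+2-1/\alpha}$, as claimed. I do not anticipate a genuine obstacle here: the proof is essentially bookkeeping once Lemma \ref{lem:O_j} is in hand. The only point requiring a modicum of care is making sure the constants hidden in $\lesssim$ really are independent of $h$, $\tau$ and $n$ — this is exactly why the prefactor in \eqref{eq:tsdifference2} was arranged to be time-step-independent, as flagged in the remark following \eqref{eq:tsdifference2} — and that the interchange of the sums over $n$ and $j$ and the telescoping of the $L^2$-in-time norms are legitimate, which follows from Tonelli's theorem since all summands are non-negative.
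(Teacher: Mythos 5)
Your proposal is correct and follows essentially the same route as the paper's proof: the same split of the double sum over $n$ and $j$ into the $n\in O_j$ part (bounded termwise by (\ref{eq:tsdifference2}) and counted via Lemma \ref{lem:O_j}) and the $n\notin O_j$ part (bounded by (\ref{eq:tsdifference1}) and telescoped to $\tau h^{2k+2}\|\partial_t u\|^2_{L^2(0,T;H^{k+1}(\R))}$). The additional care you take about the time-step-independence of the prefactor and the nonnegativity justifying the interchange of sums is exactly the bookkeeping the paper leaves implicit.
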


\subsection{Bounding the RHS of the energy identity}
In the following we will assume that the flux and the exact solution are regular enough such that the energy identity (\ref{eq:energyidentity}) holds, e.g.~$f\in C^3(\R)$ and $u\in C^3([0,T];H^1(\R))$. Furthermore we want to use quasi-best approximation properties, i.e.~ we invoke the additional assumption $u\in H^1(0,T;H^{k+1}(\R))$.

A fact that we will use tacitly in the following is that the energy of $\zeta^n_h$ can be approximately bounded by the energy of $\xi^n_h$, more precisely we have
\begin{prop}\label{prop:equiv}
	Under a general CFL condition $\tau\lesssim h$, there holds
	\begin{equation}
	\|\zeta^n_h\|^2\lesssim \|\xi^n_h\|^2+h^{2k+2}.
	\end{equation}
\end{prop}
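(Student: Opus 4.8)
The plan is to start from the first error equation (\ref{eq:erroreqa}), which relates $\zeta^n_h$ to $\xi^n_h$ via the operators $\mathcal{K}^n_j$ and $\mathcal{D}_j$, and simply test it with $p_h = \zeta^n_h$, summing over $j \in \mathbb{Z}$. This yields
$\|\zeta^n_h\|^2 = \int_\R \xi^n_h \zeta^n_h \, dx + \mathcal{K}^n(\zeta^n_h) + \tau \mathcal{D}(u^n_h - u^n, \zeta^n_h)$.
The first term is bounded by $\tfrac14\|\xi^n_h\|^2 + \|\zeta^n_h\|^2$... no — one must be careful to keep the $\|\zeta^n_h\|^2$ coefficient below $1$, so use Young's inequality as $\int \xi^n_h \zeta^n_h \leq \tfrac{1}{2}\|\xi^n_h\|^2 + \tfrac12\|\zeta^n_h\|^2$ and plan to absorb the $\tfrac12\|\zeta^n_h\|^2$ into the left-hand side (and likewise absorb the $\varepsilon\|\zeta^n_h\|^2$-type contributions coming from the other two terms). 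So the real work is to show $\mathcal{K}^n(\zeta^n_h)$ and $\tau\mathcal{D}(u^n_h - u^n, \zeta^n_h)$ are each bounded by $\varepsilon \|\zeta^n_h\|^2 + C_\varepsilon(\|\xi^n_h\|^2 + h^{2k+2})$ for small $\varepsilon$, under the CFL condition $\tau \lesssim h$.

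For the $\mathcal{K}^n$ term, recall from (\ref{eq:K}) that $\mathcal{K}^n_j(\zeta^n_h) = \int_{I_j}(\zeta^n_\pi - \xi^n_\pi)\zeta^n_h + \tau \mathcal{H}_j(u^n_h, \zeta^n_h) - \tau\mathcal{H}_j(u^n, \zeta^n_h)$. The projection-error term is controlled by Cauchy–Schwarz and (\ref{eq:bestappr}) (plus the analogous bound for $\zeta^n_\pi$, noting $w = u + \tau\partial_t u$ so its $H^{k+1}$ norm is controlled), giving $h^{k+1}\|\zeta^n_h\| \leq \varepsilon\|\zeta^n_h\|^2 + C_\varepsilon h^{2k+2}$. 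The flux difference $\tau(\mathcal{H}_j(u^n_h,\zeta^n_h) - \mathcal{H}_j(u^n,\zeta^n_h))$ is a purely hyperbolic term; writing $u^n_h - u^n = \xi^n_\pi - \xi^n_h$ and using local Lipschitz continuity of $f$ and $\hat h$, one estimates the volume part by $\tau\|u^n_h - u^n\|_{L^2}\|\partial_x \zeta^n_h\|_{L^2}$ and the boundary terms by $\tau |u^n_h - u^n|_{\Gamma_h}|\zeta^n_h|_{\Gamma_h}$; the inverse inequalities $\|\partial_x \zeta^n_h\| \lesssim h^{-1}\|\zeta^n_h\|$ and $|\zeta^n_h|_{\Gamma_h} \lesssim h^{-1/2}\|\zeta^n_h\|$ then turn the prefactor $\tau$ into $\tau h^{-1} \lesssim 1$, and splitting $u^n_h - u^n = \xi^n_\pi - \xi^n_h$ together with (\ref{eq:bestappr}) bounds everything by $\varepsilon\|\zeta^n_h\|^2 + C_\varepsilon(\|\xi^n_h\|^2 + h^{2k+2})$.

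For the nonlocal term $\tau\mathcal{D}(u^n_h - u^n, \zeta^n_h) = \tau(g_\lambda[u^n_h - u^n], \zeta^n_h)_{H^{-\lambda/2},H^{\lambda/2}}$, use boundedness of $g_\lambda : H^{\lambda/2} \to H^{-\lambda/2}$ to bound it by $\tau |u^n_h - u^n|_{H^{\lambda/2}} |\zeta^n_h|_{H^{\lambda/2}}$, then apply the fractional inverse inequality (\ref{eq:fracinv}) to $\zeta^n_h \in V_h$: $|\zeta^n_h|_{H^{\lambda/2}} \lesssim h^{-\lambda/2}\|\zeta^n_h\|$. On the other factor, split $u^n_h - u^n = \xi^n_\pi - \xi^n_h$; the discrete part also gets (\ref{eq:fracinv}), $|\xi^n_h|_{H^{\lambda/2}} \lesssim h^{-\lambda/2}\|\xi^n_h\|$, while the projection part is handled by (\ref{eq:bestappr}), $|\xi^n_\pi|_{H^{\lambda/2}} \lesssim h^{k+1-\lambda/2}$. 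Altogether this term is $\lesssim \tau h^{-\lambda}(\|\xi^n_h\| + h^{k+1-\lambda/2})\|\zeta^n_h\|$, and since $\lambda < 1$ we have $\tau h^{-\lambda} \lesssim h^{1-\lambda} \lesssim 1$ under $\tau \lesssim h$; Young's inequality finishes it. The main obstacle — and the reason the CFL condition $\tau \lesssim h$ is needed here rather than the weaker $\tau \lesssim h^{4/3}$ — is precisely balancing the powers of $h$ in the diffusion term: one loses $h^{-\lambda/2}$ from each of two fractional inverse inequalities, for a total loss of $h^{-\lambda}$, which only $\tau \lesssim h$ (giving a full power of $h$ to spend, and $1 - \lambda > 0$) can compensate. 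Collecting all estimates with $\varepsilon$ chosen small enough to absorb the $\|\zeta^n_h\|^2$ contributions into the left-hand side yields the claim.
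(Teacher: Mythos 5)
Your argument is correct and takes essentially the same route as the paper: the paper tests the error equation (\ref{eq:erroreqa}) with $\zeta^n_h-\xi^n_h$ (so that $\|\zeta^n_h-\xi^n_h\|^2$ appears directly, with no term to absorb from the $L^2$ pairing) and quotes Lemma 5.3 of Zhang--Shu for the $\mathcal{K}^n$ bound, but the resulting estimates are the ones you derive. One side remark is off, though: the binding constraint forcing $\tau\lesssim h$ is the hyperbolic volume term, whose inverse inequality costs $\tau h^{-1}$, whereas the fractional term only costs $\tau h^{-\lambda}$ and would already be absorbed under the weaker condition $\tau\lesssim h^{\lambda}$ (note also that $\tau\lesssim h^{4/3}$ is the \emph{stronger} condition, not the weaker one, and it is irrelevant here since it implies $\tau\lesssim h$).
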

\begin{proof}
By Lemma 5.3 in \cite{ZhangShu06}, the fractional approximation property (\ref{eq:bestappr}), the inverse property (\ref{eq:fracinv}) together with Youngs inequality, we have
\begin{align}
\|\zeta^n_h-\xi^n_h\|^2 &= \mathcal{K}^n(\zeta^n_h-\xi^n_h) + \tau\mathcal{D}(u^n_h-u^n,\zeta^n_h-\xi^n_h)\nonumber\\
&\leq \epsilon \|\zeta^n_h-\xi^n_h\|^2 + C(\epsilon)\left[ \frac{\tau^2}{h^2}\|\xi^n_h\|^2 + h^{2k}\tau^2 + \frac{\tau^2}{h^{2\lambda}}\|\xi^n_h\|^2 + \frac{\tau^2}{h^\lambda}h^{2k+2-\lambda}\right],
\end{align}	
where the constant $C(\epsilon)>0$ is independent of $n$,$h$ and $\tau$.
Choosing $\epsilon$ small enough, we conclude with the general CFL condition $\tau\lesssim h$.

\end{proof}

We will now proceed to bound the terms on the right-hand side of the energy identity (\ref{eq:energyidentity}). For this we note that there holds
\begin{equation}\label{eq:xi zeta}
	2\|\xi^{n+1}_h - \zeta^n_h\|^2=(\mathcal{L}^n-\mathcal{K}^n)(\xi^{n+1}_h - \zeta^n_h) + \tau\mathcal{D}(e^n_w-e^n_u,\xi^{n+1}_h - \zeta^n_h).
\end{equation}
Accordingly, we will now take measures to control the operators on the right-hand side of (\ref{eq:xi zeta}).

\begin{remark}
	Note that the energy bounds for the time level $t^{n+1}$ will necessitate a minimal accuracy assumption for the previous time level $t^n$, meaning
	\begin{equation}\label{eq:apriori}
	\|u^n-u^n_h\|\lesssim h^{3/2},
	\end{equation}
	which can be proven inductively, i.e.~by the a priori assumption $\|u^0-u^0_h\|\lesssim h^{3/2}$ and using the energy bounds presented below for consecutive time steps.  In particular we thus have that $\|e^n_u\|_\infty+\|e^n_w\|_\infty\lesssim h$. 
\end{remark}

\begin{lem}\label{lem:first bound}
	Let $\epsilon>0$. Then we have, under the general CFL condition $\tau\lesssim h$, that for any $v_h\in V_h$ there holds
	\begin{align}\label{eq:L-K}
		(\mathcal{L}^n-\mathcal{K}^n)(v_h) \leq&~ \epsilon\|v_h\|^2 + C(\epsilon)\{h^{2k+2}\tau + \tau^6 + \tau\frac{\tau}{h}|f'(u^n)|\llbracket\xi^n_h\rrbracket^2+ \tau\frac{\tau}{h}|f'(w^n)|\llbracket\zeta^n_h\rrbracket^2\nonumber\\ &~+ \tau\|\xi^n_h\|^2 + \tau^2\|\partial_x^h(\zeta^n_h-\xi^n_h)\|^2 + \|\xi^{n+1}_\pi-\xi^n_\pi\|^2\},
	\end{align}
	where $|f'(p^n)|\llbracket p^n_h\rrbracket^2:=\sum_{j\in\mathbb{Z}}|f'(p^n_{j})|\llbracket p^n_h\rrbracket^2_{j}$, and $(\partial_x^h p^n_h)\vert_{I_j}:=\partial_x (p^n_h\vert_{I_j})$. Furthermore, there holds
	\begin{align}\label{eq:D1}
	\tau\mathcal{D}(e^n_w-e^n_u,v_h) &\leq \epsilon\|v_h\|^2 +  C(\epsilon)\{h^{2k+2-\lambda}\tau + \frac{\tau^2}{h^\lambda}(|\xi^n_h|_{H^{\lambda/2(\R)}}^2+|\zeta^n_h|_{H^{\lambda/2}(\R)}^2)\},
	\end{align}
	and the constant $C(\epsilon)>0$ is independent of $n$,$h$ and $\tau$.
\end{lem}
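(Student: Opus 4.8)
The plan is to estimate the two operators $(\mathcal{L}^n-\mathcal{K}^n)(v_h)$ and $\tau\mathcal{D}(e^n_w-e^n_u,v_h)$ separately, expanding each into its constituent pieces according to the definitions \eqref{eq:K}, \eqref{eq:L} of $\mathcal{K}^n_j$, $\mathcal{L}^n_j$ and of $\mathcal{D}_j$, then bounding every piece using the quasi-best approximation property \eqref{eq:bestappr}, the fractional inverse inequality \eqref{eq:fracinv}, and repeated applications of Young's inequality to absorb the $v_h$-dependent factors into $\epsilon\|v_h\|^2$.

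\textbf{Step 1: the hyperbolic part $(\mathcal{L}^n-\mathcal{K}^n)(v_h)$.} First I would write out
\[
(\mathcal{L}^n-\mathcal{K}^n)(v_h) = \int_\R(2\xi^{n+1}_\pi-2\zeta^n_\pi-2E^n)v_h~dx + \tau\bigl(\mathcal{H}(w^n_h,v_h)-\mathcal{H}(w^n,v_h)\bigr) - \tau\bigl(\mathcal{H}(u^n_h,v_h)-\mathcal{H}(u^n,v_h)\bigr),
\]
noting that the $\zeta^n_\pi$ and $\xi^n_\pi$ terms from $\mathcal{L}^n$ and $\mathcal{K}^n$ partially cancel (the residual involves $2\xi^{n+1}_\pi-\zeta^n_\pi-\xi^n_\pi - (\zeta^n_\pi-\xi^n_\pi) = 2\xi^{n+1}_\pi - 2\zeta^n_\pi$). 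For the $L^2$-inner-product terms, Cauchy--Schwarz plus Young gives $\epsilon\|v_h\|^2$ plus $C(\epsilon)$ times $\|\xi^{n+1}_\pi\|^2 + \|\zeta^n_\pi\|^2 + \|E^n\|^2$; here $\|E^n\|\lesssim\tau^3$ from Lemma \ref{lem:consistency} contributes $\tau^6$, while $\|\xi^{n+1}_\pi\|^2,\|\zeta^n_\pi\|^2\lesssim h^{2k+2}$ from \eqref{eq:bestappr} (using $w = u+\tau\partial_t u$ and the regularity of $u$, with the factor $\tau$ produced by rewriting differences appropriately so the $\|\xi^{n+1}_\pi-\xi^n_\pi\|^2$ term appears). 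The genuinely delicate part is the flux difference $\tau(\mathcal{H}(p^n_h,v_h)-\mathcal{H}(p^n,v_h))$ for $p\in\{u,w\}$: here one decomposes $\mathcal{H}$ into its volume term $\int_{I_j}(f(p^n_h)-f(p^n))(v_h)_x~dx$ and its jump/flux terms, inserts $p^n_h - p^n = \xi^n_\pi - \xi^n_h$ (resp.\ $\zeta^n_\pi-\zeta^n_h$), and then \emph{exactly} follows Lemma 5.1--5.3 of \cite{ZhangShu06}: the volume term is handled by an integration by parts trading the derivative on $v_h$ for a derivative on $f(p^n_h)-f(p^n)$ plus a jump term, and the jump terms are where the flux-difference quantity $a(p)$ enters and produces the characteristic terms $\tfrac{\tau}{h}|f'(u^n)|\llbracket\xi^n_h\rrbracket^2$ and $\tfrac{\tau}{h}|f'(w^n)|\llbracket\zeta^n_h\rrbracket^2$ via Lemma \ref{lem:a} and assumption \eqref{eq:flux}, together with the $\|\xi^n_h\|^2$, $\tau^2\|\partial^h_x(\zeta^n_h-\xi^n_h)\|^2$ and $h^{2k+2}\tau$ remainders. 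The $\|e^n_u\|_\infty+\|e^n_w\|_\infty\lesssim h$ bound from the a priori assumption \eqref{eq:apriori} is used to linearize $f$ and keep the nonlinear remainder at the claimed order.

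\textbf{Step 2: the diffusive part $\tau\mathcal{D}(e^n_w-e^n_u,v_h)$.} Since $e^n_u=\xi^n_\pi-\xi^n_h$ and $e^n_w=\zeta^n_\pi-\zeta^n_h$, and since $w-u=\tau\partial_t u$ so that $\zeta^n_\pi-\xi^n_\pi = \tau\,\Pi^n_h(\partial_t u)$-type differences are $O(\tau h^{k+1})$ in $L^2$, the key is boundedness of $g_\lambda:H^{\lambda/2}\to H^{-\lambda/2}$: one writes $\tau\mathcal{D}(e^n_w-e^n_u,v_h) = \tau\,(g_\lambda[e^n_w-e^n_u],v_h)_{H^{-\lambda/2},H^{\lambda/2}} \le \tau\,|e^n_w-e^n_u|_{H^{\lambda/2}}\,|v_h|_{H^{\lambda/2}}$, applies the fractional inverse inequality \eqref{eq:fracinv} to turn $|v_h|_{H^{\lambda/2}}$ into $h^{-\lambda/2}\|v_h\|$, and then Young's inequality splits off $\epsilon\|v_h\|^2$ at the cost of $C(\epsilon)\tfrac{\tau^2}{h^\lambda}|e^n_w-e^n_u|^2_{H^{\lambda/2}}$; expanding $|e^n_w-e^n_u|^2_{H^{\lambda/2}}\lesssim |\xi^n_\pi|^2_{H^{\lambda/2}}+|\zeta^n_\pi|^2_{H^{\lambda/2}}+|\xi^n_h|^2_{H^{\lambda/2}}+|\zeta^n_h|^2_{H^{\lambda/2}}$ and using \eqref{eq:bestappr} to bound the projection seminorms by $h^{2k+2-\lambda}$ (the $\tfrac{\tau^2}{h^\lambda}\cdot h^{2k+2-\lambda}$ term, absorbed into $h^{2k+2-\lambda}\tau$ under $\tau\lesssim h$) yields \eqref{eq:D1}.

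\textbf{Main obstacle.} The routine parts are the $L^2$ Cauchy--Schwarz/Young estimates and the diffusive term. The real work, and the only place where anything beyond bookkeeping happens, is the flux-difference estimate for $\mathcal{H}$: reproducing the Zhang--Shu machinery that converts the nonlinear flux jumps into the weighted jump terms $\tfrac{\tau}{h}|f'(\cdot)|\llbracket\cdot\rrbracket^2$ (rather than the naive $\tfrac{\tau}{h}\llbracket\cdot\rrbracket^2$, which would be too weak to later close the Gronwall argument at optimal order), keeping careful track that the interpolation points of $\Pi^n_h$ are chosen \emph{upwind} relative to the sign of $f'(u^n)$ (resp.\ $f'(w^n)$) so that the boundary contributions of $\xi^n_\pi$ vanish where they must — this is exactly the reason Gauss--Radau points are used and is what makes the $h^{k+1}$ (rather than $h^{k+1/2}$) rate possible.
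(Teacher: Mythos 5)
Your proposal follows essentially the same route as the paper: the hyperbolic part is delegated to the Zhang--Shu machinery (their Lemma 5.1) with the flux-difference terms controlled via \eqref{eq:flux} and Lemma \ref{lem:a} plus Young's and inverse inequalities, the time-step difference $\xi^{n+1}_\pi-\xi^n_\pi$ is isolated exactly as the paper requires for Lemma \ref{lem:approx}, and the diffusive part is handled by duality, the inverse inequality \eqref{eq:fracinv}, the approximation property \eqref{eq:bestappr} and Young. Only two cosmetic points: the sign in $p^n_h-p^n=\xi^n_h-\xi^n_\pi$ is reversed in your write-up, and to end up with only $\tau\|\xi^n_h\|^2$ (and no $\tau\|\zeta^n_h\|^2$) on the right-hand side you also need the equivalence $\|\zeta^n_h\|^2\lesssim\|\xi^n_h\|^2+h^{2k+2}$ from Proposition \ref{prop:equiv}, which the paper invokes explicitly.
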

\begin{proof}
We obtain (\ref{eq:L-K}) by using Lemma 5.1 from \cite{ZhangShu06} and bounding the flux terms via (\ref{eq:flux}) before applying Young's and an inverse inequality in a straightforward fashion together with the asymptotic equivalence from Proposition \ref{prop:equiv}. Also note that in comparison to \cite{ZhangShu06}, we have to isolate the time step difference $\xi^{n+1}_\pi-\xi^n_\pi$ to treat it by means of Lemma \ref{lem:approx} in the anticipated Gronwall argument.

To show (\ref{eq:D1}), we just use Young's inequality, the fractional approximation property (\ref{eq:bestappr}) together with the inverse property (\ref{eq:fracinv}). 
\end{proof}

\begin{remark}
	The quantity $\frac{\tau^2}{h^\lambda}(|\xi^n_h|_{H^{\lambda/2}(\R)}^2+|\zeta^n_h|_{H^{\lambda/2}(\R)}^2)$ in (\ref{eq:D1}) ought to be absorbed by the dissipative terms on the left-hand side of the energy identity (\ref{eq:energyidentity}). This is achieved by a time step restriction of the form $\tau\leq c h^{\lambda}$ with say $0<c<\frac{c_\lambda}{8}$, which is implied by the general CFL condition $\tau\lesssim h$ for small enough $h$.
\end{remark}

As a consequence, we have
\begin{cor}
Under the general CFL condition $\tau\lesssim h$ there holds,
	\begin{align}\label{eq:xin+1-zetan}
	\|\xi^{n+1}_h-\zeta^n_h\|^2 \lesssim&~ h^{2k+2-\lambda}\tau + \tau^6 +  \tau\frac{\tau}{h}|f'(u^n)|\llbracket\xi^n_h\rrbracket^2+ \tau\frac{\tau}{h}|f'(w^n)|\llbracket\zeta^n_h\rrbracket^2\nonumber\\&+\tau\|\xi^n_h\|^2 + \tau^2\|\partial_x^h(\zeta^n_h-\xi^n_h)\|^2 + \|\xi^{n+1}_\pi-\xi^n_\pi\|^2 \nonumber\\&+ \frac{\tau^2}{h^\lambda}(|\xi^n_h|_{H^{\lambda/2(\R)}}^2+|\zeta^n_h|_{H^{\lambda/2}(\R)}^2).
	\end{align}
\end{cor}

When trying to bound the quantity $\tau^2\|\partial_x^h(\zeta^n_h-\xi^n_h)\|^2$ in (\ref{eq:xin+1-zetan}), a case distinction can be made for $k=1$ and the case $k\geq2$. The following Lemma will facilitate the argument.

\begin{lem}
	Under the general CFL condition $\tau\lesssim h$ there holds,
	\begin{align}\label{eq:Kbound}
	\mathcal{K}^n(v_h) \lesssim& \epsilon \|v_h\|^2 + C(\epsilon)\{h^{2k+2}\tau + \tau\frac{\tau}{h}|f'(u^n)|\llbracket\xi^n_h\rrbracket^2 + \tau\|\xi^n_h\|^2\nonumber\\ &+ \tau\int_{\R}v_h\partial_x^h\xi^n_h~dx\},
	\end{align}
	where the constant $C(\epsilon)>0$ is independent of $n$,$h$ and $\tau$.
\end{lem}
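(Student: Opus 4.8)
The goal is to bound $\mathcal{K}^n(v_h)$ as defined in \eqref{eq:K}, and the strategy mirrors the treatment in Zhang--Shu \cite{ZhangShu06} for the analogous term. I would start from the definition
\[
\mathcal{K}^n_j(v_h) = \int_{I_j}(\zeta^n_\pi - \xi^n_\pi)v_h~dx + \tau\bigl(\mathcal{H}_j(u^n_h,v_h) - \mathcal{H}_j(u^n,v_h)\bigr),
\]
and sum over $j\in\mathbb{Z}$. The first term is handled purely by approximation theory: since $\zeta^n_\pi = \xi^n_\pi + \tau(w^n-u^n-\Pi^n_h(w^n-u^n))$ and $w-u = \tau\partial_t u$, one has $\|\zeta^n_\pi - \xi^n_\pi\| = \tau\|\tau\partial_t u - \Pi^n_h(\tau\partial_t u)\| \lesssim \tau^2 h^{k+1}$ by \eqref{eq:bestappr} (using $u\in H^1(0,T;H^{k+1})$), so Young's inequality gives a contribution $\epsilon\|v_h\|^2 + C(\epsilon)\tau^4 h^{2k+2}$, which is subsumed in $h^{2k+2}\tau$ up to the $\epsilon$-term — actually one should keep $\tau^4h^{2k+2}\lesssim h^{2k+2}\tau$ under the CFL condition, so this is fine.

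\textbf{The flux difference.} The heart of the matter is $\tau\sum_j\bigl(\mathcal{H}_j(u^n_h,v_h) - \mathcal{H}_j(u^n,v_h)\bigr)$. Writing $e^n_u = u^n - u^n_h = \xi^n_\pi - \xi^n_h$ and expanding $\mathcal{H}_j$ from \eqref{eq:H}, this splits into a volume term $\int_{I_j}(f(u^n_h)-f(u^n))\partial_x v_h~dx$ and boundary terms involving the numerical flux differences $\hat h(u^n_h)_j - \hat h(u^n)_j$. Following Lemma 5.1 of \cite{ZhangShu06}, one Taylor-expands $f$, writes $f(u^n_h)-f(u^n) = -f'(\cdot)e^n_u + O(\|e^n_u\|_\infty|e^n_u|)$ (using the a priori bound $\|e^n_u\|_\infty\lesssim h$ from \eqref{eq:apriori}), and crucially uses the $a(p)$-machinery of Lemma \ref{lem:a} together with the structural assumption \eqref{eq:flux} on the numerical flux to estimate the jump contributions. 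The cancellations engineered by the upwind Gauss--Radau projection — namely the interpolation property $\Pi^{\mathrm{left}}v(x_j^+)=v(x_j^+)$ resp.\ $\Pi^{\mathrm{right}}v(x_{j+1}^-)=v(x_{j+1}^-)$ and the orthogonality \eqref{eq:orth} against $P^{k-1}$ — are what allow $\xi^n_\pi$ to drop out of the leading-order boundary terms, leaving only: (i) a genuinely dissipative-looking jump term $\tau\frac{\tau}{h}|f'(u^n)|\llbracket\xi^n_h\rrbracket^2$ (with the $\tau/h$ coming from an inverse trace inequality applied to $v_h$ paired against $\xi^n_h$), (ii) an $L^2$ term $\tau\|\xi^n_h\|^2$ absorbing the lower-order Taylor remainders and the $\xi^n_\pi$ volume contributions via \eqref{eq:bestappr}, and (iii) the volume term $\tau\int_\R v_h\,\partial_x^h\xi^n_h~dx$, which is left \emph{unestimated} on purpose — it will later be handled differently in the $k=1$ versus $k\geq 2$ cases (integration by parts giving jump terms for $k=1$, a superconvergence/inverse-inequality argument for $k\geq 2$), which is why it appears explicitly on the right-hand side of \eqref{eq:Kbound} rather than being bounded by $\epsilon\|v_h\|^2$.

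\textbf{Main obstacle.} The delicate point is extracting exactly the claimed jump structure from the boundary terms without picking up an uncontrolled $|f'(u^n)|$-free jump of $\xi^n_h$ or a stray $\|v_h\|$ that cannot be absorbed into $\epsilon\|v_h\|^2$. This requires carefully tracking which traces of $\xi^n_\pi$ vanish (one-sided, depending on whether $\Pi^{\mathrm{left}}$ or $\Pi^{\mathrm{right}}$ was chosen on the relevant cell) and invoking Lemma \ref{lem:a}(a)--(b) to convert $\hat h$-differences into $a(p)$-weighted jumps plus $c_\ast\llbracket\cdot\rrbracket^2$ corrections, then \eqref{eq:flux} to replace $a$ by $|f'(\bar p)| + c_\ast|\llbracket p\rrbracket|$; the quadratic-in-jump corrections are absorbed using the a priori bound and an inverse inequality. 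Compared with \cite{ZhangShu06}, the only new subtlety is bookkeeping around the now time-dependent projection, but since \eqref{eq:Kbound} is a purely local-in-time estimate at level $t^n$ with a \emph{fixed} choice of $\Pi^n_h$, the time-dependence plays no role here — it only matters later in Lemma \ref{lem:approx}. So the bulk of the proof is a (lengthy but routine) adaptation of Lemma 5.1 of \cite{ZhangShu06}, and I would state it as such, spelling out only the places where the isolated term $\tau\int_\R v_h\partial_x^h\xi^n_h~dx$ is kept aside.
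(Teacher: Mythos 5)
Your plan follows essentially the same route as the paper's (very terse) proof: both reduce the estimate to the Zhang--Shu machinery --- Taylor expansion of the flux difference, Lemma \ref{lem:a} together with the structural assumption (\ref{eq:flux}), Young's and inverse inequalities, and the one-sided interpolation/orthogonality properties of the upwind Gauss--Radau projection to upgrade the $h^{2k+1}\tau$ contribution to $h^{2k+2}\tau$ --- while deliberately leaving the volume term $\tau\int_\R v_h\partial_x^h\xi^n_h\,dx$ unestimated for the later $k=1$ versus $k\geq 2$ treatment. One harmless slip: since $\zeta^n_\pi-\xi^n_\pi=(I-\Pi^n_h)(\tau\partial_t u^n)$ carries only a single factor of $\tau$, the correct bound is $\tau h^{k+1}$ rather than $\tau^2 h^{k+1}$, but Young's inequality then still produces the claimed $h^{2k+2}\tau$ term because $\tau\lesssim 1$.
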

\begin{proof}
	A similar estimate is presented in Lemma 5.3 in \cite{ZhangShu06}. The only difference is that the flux terms are again estimated via (\ref{eq:flux}) before applying Young's and an inverse inequality and the term $(Ch^{-1}+C_\star h^{-1}\|e^n_u\|^2_\infty)h^{2k+2}\tau$ can be improved to $(C+C_\star h^{-1}\|e^n_u\|^2_\infty)h^{2k+2}\tau$ by the strategy that is presented in the proof of Lemma \ref{lem:lastlem} below.
\end{proof}
We proceed to bound the term $\tau^2\|\partial_x^h(\zeta^n_h-\xi^n_h)\|^2$ in (\ref{eq:xin+1-zetan}).

\begin{lem}
		Under the general CFL condition $\tau\lesssim h$ we have for piecewise linear finite elements, i.e.~$k=1$,
		\begin{align}
		\tau^2\|\partial_x^h(\zeta^n_h-\xi^n_h)\|^2 \lesssim&~ h^{4}\tau + \tau\frac{\tau^3}{h^3}|f'(u^n)|\llbracket\xi^n_h\rrbracket^2 + \tau\|\xi^n_h\|^2 \nonumber\\ &+ \frac{\tau^2}{h^\lambda}(h^{4-\lambda}+|\xi^n_h|^2_{H^{\lambda/2}(\R)}),
		\end{align}
		whereas for $k\geq 2$ we have
		\begin{equation}\label{eq:kgeq2}
		\tau^2\|\partial_x^h(\zeta^n_h-\xi^n_h)\|^2\lesssim \frac{\tau^4}{h^4}\|\xi^n_h\|^2 + h^{2k+2-\lambda}\tau.
		\end{equation}
\end{lem}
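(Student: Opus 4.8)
The plan is to pass to the difference $\eta^n_h := \zeta^n_h - \xi^n_h$. Summing the error equation (\ref{eq:erroreqa}) over $j\in\mathbb{Z}$ gives, for every $p_h\in V_h$, the reduced identity $\int_\R\eta^n_h\,p_h\,dx = \mathcal{K}^n(p_h) + \tau\mathcal{D}(u^n_h-u^n,p_h)$, which is exactly the identity exploited in the proof of Proposition \ref{prop:equiv}. Reading that proof without collapsing its right-hand side, and using $\lambda<1$ to absorb $\tfrac{\tau^2}{h^{2\lambda}}\|\xi^n_h\|^2$ into $\tfrac{\tau^2}{h^2}\|\xi^n_h\|^2$, it yields the sharper intermediate bound $\|\eta^n_h\|^2\lesssim \tfrac{\tau^2}{h^2}\|\xi^n_h\|^2+h^{2k}\tau^2+\tau^2 h^{2k+2-2\lambda}$. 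Both cases of the lemma are deduced from these facts together with finite-element inverse inequalities; the two cases differ only in how precisely $\|\partial_x^h\eta^n_h\|$ is extracted.

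For $k\geq 2$ the reduced identity need not be used at all. By the inverse inequality $\|\partial_x^h\eta^n_h\|^2\lesssim h^{-2}\|\eta^n_h\|^2$, so $\tau^2\|\partial_x^h\eta^n_h\|^2\lesssim \tfrac{\tau^2}{h^2}\|\eta^n_h\|^2 \lesssim \tfrac{\tau^4}{h^4}\|\xi^n_h\|^2 + h^{2k-2}\tau^4 + \tau^4 h^{2k-2\lambda}$. The first term is already of the claimed form; the two surplus terms are absorbed into $h^{2k+2-\lambda}\tau$ using the CFL restriction: $h^{2k-2}\tau^4 = h^{2k-2}\tau^3\cdot\tau\lesssim h^{2k+2}\tau$ under $\tau\lesssim h^{4/3}$, while $\tau^4 h^{2k-2\lambda}=\tau^3 h^{2k-2\lambda}\cdot\tau\lesssim h^{2k+3-2\lambda}\tau\lesssim h^{2k+2-\lambda}\tau$ already under $\tau\lesssim h$ (here $3-2\lambda\geq 2-\lambda$ since $\lambda\leq 1$). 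This establishes (\ref{eq:kgeq2}).

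The case $k=1$ requires a finer argument, because the crude inverse inequality just used produces $\tfrac{\tau^4}{h^4}\|\xi^n_h\|^2$ and $\tau^4$, which are not dominated by $\tau\|\xi^n_h\|^2$ and $h^4\tau$ under the weaker CFL $\tau\lesssim h$. Here one uses the elementwise identity $\|p'\|^2_{L^2(I_j)}=\tfrac{12}{h^2}\|p-\pi^0_h p\|^2_{L^2(I_j)}$, valid for $p\in P^1(I_j)$, where $\pi^0_h$ denotes the $L^2$-projection onto elementwise constants. Setting $\mu_h:=\eta^n_h-\pi^0_h\eta^n_h\in V_h$, orthogonality of $\pi^0_h$ gives $\int_\R\eta^n_h\mu_h\,dx=\|\mu_h\|^2=\tfrac{h^2}{12}\|\partial_x^h\eta^n_h\|^2$; in particular $\|\mu_h\|\simeq h\|\partial_x^h\eta^n_h\|$, one power of $h$ better than a generic test function would give. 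Testing the reduced identity with $p_h=\mu_h$ yields $\tfrac{h^2}{12}\|\partial_x^h\eta^n_h\|^2=\mathcal{K}^n(\mu_h)+\tau\mathcal{D}(u^n_h-u^n,\mu_h)$. One bounds $\mathcal{K}^n(\mu_h)$ by (\ref{eq:Kbound}), observing that the otherwise dangerous term $\tau\int_\R\mu_h\,\partial_x^h\xi^n_h\,dx$ \emph{vanishes}, since $\partial_x^h\xi^n_h$ is piecewise constant while $\mu_h\perp V^0_h$ by construction; and one bounds $\tau\mathcal{D}(u^n_h-u^n,\mu_h)$ via the $H^{\pm\lambda/2}$ duality, boundedness of $g_\lambda$, the approximation property (\ref{eq:bestappr}) applied to $\xi^n_\pi$ (recall $u^n_h-u^n=\xi^n_h-\xi^n_\pi$), the fractional inverse inequality (\ref{eq:fracinv}), and Young's inequality. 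Because of the gain $\|\mu_h\|^2\simeq h^2\|\partial_x^h\eta^n_h\|^2$, the quadratic terms $\epsilon\|\mu_h\|^2$ and $\delta\|\mu_h\|^2$ produced along the way are absorbed --- after dividing through by $h^2/12$ --- into a small, $h$-independent multiple of $\|\partial_x^h\eta^n_h\|^2$. Multiplying the resulting bound for $\|\partial_x^h\eta^n_h\|^2$ by $\tau^2$ and simplifying with $\tau\lesssim h$ (e.g.\ $h^2\tau^3\lesssim h^4\tau$, $\tfrac{\tau^3}{h^2}\|\xi^n_h\|^2\lesssim\tau\|\xi^n_h\|^2$, $\tau^4 h^{2-2\lambda}\lesssim \tfrac{\tau^2}{h^\lambda}h^{4-\lambda}$, $\tfrac{\tau^4}{h^{2+\lambda}}|\xi^n_h|^2_{H^{\lambda/2}}\lesssim\tfrac{\tau^2}{h^\lambda}|\xi^n_h|^2_{H^{\lambda/2}}$) gives the stated $k=1$ estimate, the jump term $\tau\tfrac{\tau^3}{h^3}|f'(u^n)|\llbracket\xi^n_h\rrbracket^2$ being kept intact so that it can later be absorbed by the upwind dissipation in the Gronwall step.

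I expect the $k=1$ case to be the crux. Its success hinges on choosing the test function $\mu_h$ so that it simultaneously (i) recovers $\|\partial_x^h\eta^n_h\|^2$ up to the factor $h^{-2}$, (ii) carries an extra power of $h$ in its $L^2$-norm, so that the $\epsilon\|\mu_h\|^2$-type contributions of (\ref{eq:Kbound}) can be absorbed with $h$-independent constants after the division by $h^2$, and (iii) annihilates the cross term $\int_\R\mu_h\,\partial_x^h\xi^n_h\,dx$; the elementwise mean-subtracted projection $\mu_h=\eta^n_h-\pi^0_h\eta^n_h$ is the natural choice doing all three. The remaining work --- checking that after absorption and multiplication by $\tau^2$ every surviving term is dominated by one of the terms on the right-hand sides of the two claimed estimates under the respective CFL conditions --- is routine bookkeeping with inverse (and inverse trace) inequalities and the elementary relation $\tau\lesssim h$.
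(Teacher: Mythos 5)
Your argument is correct and coincides with the paper's own proof: for $k=1$ you test the summed error equation with the mean-subtracted function $\mu_h=\eta^n_h-\pi^0_h\eta^n_h$ (the paper's $g_h-\overline{g}_h$), exploit that $\partial_x^h\xi^n_h$ is piecewise constant so the cross term in (\ref{eq:Kbound}) vanishes, and convert back via the exact identity $\|\partial_x^h\eta^n_h\|^2=\tfrac{12}{h^2}\|\mu_h\|^2$, while for $k\geq2$ you apply the crude inverse inequality to the $L^2$-bound from Proposition \ref{prop:equiv} --- exactly the route the paper takes. Your explicit remark that absorbing $h^{2k-2}\tau^4$ into $h^{2k+2-\lambda}\tau$ uses $\tau\lesssim h^{4/3}$ rather than only $\tau\lesssim h$ is a fair, and harmless, sharpening of the paper's phrasing, since that is the CFL condition Theorem \ref{thm:main} imposes for $k\geq2$ anyway.
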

\begin{proof}
 We start with the case $k=1$. Set $g_h=\zeta^n_h-\xi^n_h$ and let $\overline{g}_h$ be the piecewise constant projection of $g_h$, hence 
	\begin{equation}\label{eq:orth}
		\int_{\R}(g_h-\overline{g}_h)\partial_x^h v_h~dx=0\quad\forall v_h\in V_h^1.
	\end{equation}
By orthogonality (\ref{eq:orth}) and the error equation (\ref{eq:erroreqa}), we have
\begin{equation*}
\|g_h-\overline{g}_h\|^2=(g_h,g_h-\overline{g}_h)=\mathcal{K}^n_h(g_h-\overline{g}_h)+\tau\mathcal{D}(u^n_h-u^n,g_h-\overline{g}_h).
\end{equation*}	
Using a bound for $\mathcal{K}^n$, namely (\ref{eq:Kbound}),
we obtain by orthogonality and for small enough $\epsilon$,
\begin{equation*}
\|g_h-\overline{g}_h\|^2\lesssim  h^{4}\tau + \frac{\tau^2}{h}|f'(u^n)|\llbracket\xi^n_h\rrbracket^2 + \tau\|\xi^n_h\|^2 + \frac{\tau^2}{h^\lambda}(h^{4-\lambda}+|\xi^n_h|^2_{H^{\lambda/2}(\R)}).
\end{equation*}
Thus, by an inverse inequality and the general CFL condition $\tau\lesssim h$, we obtain the bound for $\tau^2\|\partial_x^h(\zeta^n_h-\xi^n_h)\|^2$.

For higher order polynomials, i.e.~$k\geq 2$, the above argument cannot be replicated and one obtains only
\begin{equation}
\tau^2\|\partial_x^h(\zeta^n_h-\xi^n_h)\|^2\lesssim \frac{\tau^4}{h^4}\|\xi^n_h\|^2 + h^{2k+2-\lambda}\tau,
\end{equation}
by the same arguments used in the proof of Proposition \ref{prop:equiv}.
\end{proof}

\begin{remark}
	Note that the term $\frac{\tau^4}{h^4}\|\xi^n_h\|^2$ that arises in (\ref{eq:kgeq2}) necessitates a stricter time step condition $\tau\lesssim h^{4/3}$ for higher order polynomials.
\end{remark}

It thus remains to bound the last three terms on the right-hand side of the energy identity (\ref{eq:energyidentity}). 
\begin{lem}\label{lem:lastlem}
	Let $\epsilon>0$. Under the general CFL condition $\tau\lesssim h$ then there holds,
	\begin{align}
	\mathcal{K}^n(\xi^n_h) + \frac{1}{2}|f'(u^n)|\llbracket\xi^n_h\rrbracket^2\tau \lesssim&~ h^{2k+2}\tau + \tau\|\xi^n_h\|^2,\label{eq:K}\\
	\mathcal{L}^n(\zeta^n_h) + \frac{1}{2}|f'(u^n)|\llbracket\xi^n_h\rrbracket^2\tau  \lesssim&~ h^{2k+2}\tau + \tau^5 + \tau\|\xi^n_h\|^2\nonumber\\ &+ 2\int_{\R}(\xi^{n+1}_\pi - \xi^n_\pi)\zeta^n_h~dx\label{eq:L}, 
	\end{align}
	as well as
	\begin{equation}\label{eq:D}
	\tau|\mathcal{D}(\xi^n_\pi,\xi^n_h)+\mathcal{D}(\zeta^n_\pi,\zeta^n_h)|\leq~ C(\epsilon)h^{2k+2-\lambda}\tau + \epsilon \tau(|\xi^n_h|^2_{H^{\lambda/2}}+|\zeta^n_h|^2_{H^{\lambda/2}}),
	\end{equation}
	where the constant $C(\epsilon)>0$ is independent of $n$,$h$ and $\tau$.
\end{lem}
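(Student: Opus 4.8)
We prove the three estimates in turn. The bounds for $\mathcal{K}^n(\xi^n_h)$ and $\mathcal{L}^n(\zeta^n_h)$ are the fractional-conservation-law counterparts of the hyperbolic estimates of Zhang and Shu (Lemmas~5.1--5.3 in \cite{ZhangShu06}), adapted to the time-dependent upwind projection of Definition~\ref{def:radau}; the third bound concerns the purely nonlocal contribution and follows from a short duality argument. Three tools recur in the first two: the orthogonality (\ref{eq:orth}) of $\Pi^n_h$ against $P^{k-1}(I_j)$; the fact that the one-sided traces of the projection errors $\xi^n_\pi,\zeta^n_\pi$ entering the \emph{upwind} numerical flux are precisely the interpolated Gauss--Radau nodes and hence vanish wherever the upwinding is definite, i.e.\ where $|f'(u^n)|>h$; and Lemma~\ref{lem:a} together with assumption (\ref{eq:flux}), which let us trade the dissipation $-a(u^n_h)_j\llbracket\xi^n_h\rrbracket^2_j$ produced by the monotone flux against the critical jump term $\tfrac12|f'(u^n_j)|\llbracket\xi^n_h\rrbracket^2_j$.

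For the bound on $\mathcal{K}^n(\xi^n_h)$ we expand $\mathcal{K}^n$ by its definition. Its $L^2$-part is $\int_\R(\zeta^n_\pi-\xi^n_\pi)\xi^n_h\,dx=\tau\int_\R(\partial_t u^n-\Pi^n_h\partial_t u^n)\xi^n_h\,dx$, which by (\ref{eq:bestappr}) and Young's inequality is $\lesssim\tau h^{2k+2}+\tau\|\xi^n_h\|^2$. In the flux part $\tau(\mathcal{H}(u^n_h,\xi^n_h)-\mathcal{H}(u^n,\xi^n_h))$ we write $f(u^n_h)-f(u^n)=-f'(\cdot)(\xi^n_\pi-\xi^n_h)$ and integrate by parts elementwise: the bulk term tested against $\partial_x\xi^n_h\in P^{k-1}(I_j)$ with the factor $\xi^n_\pi$ vanishes up to the $O(h)$ variation of $f'$ by (\ref{eq:orth}), while the factor $\xi^n_h$ gives $\tfrac12\int_\R f'\partial_x^h((\xi^n_h)^2)\,dx$, i.e.\ interface boundary values plus a harmless bulk remainder. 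At each interface the numerical-flux difference equals $\hat h(u^n_h)_j-f(u^n(x_j))$ by consistency, and through the upwind rule it involves only the trace on the side fixed by the sign of $f'$; by construction that one-sided trace of $\xi^n_\pi$ is the Radau node and vanishes when $|f'(u^n)|>h$, leaving only quadratic-in-$\xi^n_h$ interface contributions. Collecting these with the bulk boundary values and summing over $j$, the telescopic part is controlled by the smoothness of $f'\circ u^n$ and the remainder is exactly the monotone-flux dissipation, which Lemma~\ref{lem:a}, (\ref{eq:flux}) and the a priori bound $\|e^n_u\|_\infty\lesssim h$ turn into $-\tfrac12|f'(u^n_j)|\llbracket\xi^n_h\rrbracket^2_j$ plus terms absorbed by Young's and inverse inequalities into $\tau h^{2k+2}+\tau\|\xi^n_h\|^2$; on cells with $|f'(u^n)|\le h$ the critical jump term is itself $\lesssim h\llbracket\xi^n_h\rrbracket^2_j\tau\lesssim\tau\|\xi^n_h\|^2$ by an inverse inequality, so no cancellation is needed there. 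It is essential to test here with $v_h=\xi^n_h$ itself rather than through the generic estimate (\ref{eq:Kbound}), so that the bulk term $\int_\R\xi^n_h\partial_x^h\xi^n_h\,dx$ fuses with the interface jumps into net dissipation.

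The bound on $\mathcal{L}^n(\zeta^n_h)$ follows along the same lines, testing with $\zeta^n_h$. We split $2\xi^{n+1}_\pi-\zeta^n_\pi-\xi^n_\pi-2E^n=2(\xi^{n+1}_\pi-\xi^n_\pi)-(\zeta^n_\pi-\xi^n_\pi)-2E^n$: the term $2\int_\R(\xi^{n+1}_\pi-\xi^n_\pi)\zeta^n_h\,dx$ is retained explicitly for the later application of Lemma~\ref{lem:approx} in the Gronwall step; $\zeta^n_\pi-\xi^n_\pi=\tau(\partial_t u^n-\Pi^n_h\partial_t u^n)$ contributes $\lesssim\tau h^{2k+2}+\tau\|\zeta^n_h\|^2$; and $-2\int_\R E^n\zeta^n_h\,dx\lesssim\tau^5+\tau\|\zeta^n_h\|^2$ by $\|E^n\|\lesssim\tau^3$ (Lemma~\ref{lem:consistency}) and Young's inequality with weight $\tau^{1/2}$. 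The flux part $\tau(\mathcal{H}(w^n_h,\zeta^n_h)-\mathcal{H}(w^n,\zeta^n_h))$ is treated exactly as above with $w^n=u^n+\tau\partial_t u^n$ in place of $u^n$, using that $\Pi^n_h$ remains a left or right Gauss--Radau projection on each cell — so $\zeta^n_\pi$ vanishes at the corresponding node — and that $|f'(w^n)-f'(u^n)|\lesssim\tau\lesssim h$, so the upwinding of $\hat h(w^n_h)$ matches the node selected by $\Pi^n_h$ up to $O(h)$ corrections. The dissipation so obtained controls the $\zeta^n_h$-jump terms; Proposition~\ref{prop:equiv}, the error equation (\ref{eq:erroreqa}) and the closeness $w^n_h=u^n_h+O(\tau)$ then let us pass from the $\|\zeta^n_h\|^2$- and $|f'|\llbracket\zeta^n_h\rrbracket^2$-type quantities to $\|\xi^n_h\|^2$ and $|f'(u^n)|\llbracket\xi^n_h\rrbracket^2$, the discrepancy falling into the error terms already collected, whence the stated inequality.

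Finally, for the $\mathcal{D}$-bound we use that $\mathcal{D}(\xi^n_\pi,\xi^n_h)=(g_\lambda[\xi^n_\pi],\xi^n_h)_{H^{-\lambda/2}(\R),H^{\lambda/2}(\R)}$ and that the Fourier symbol $|y|^\lambda$ gives $\|g_\lambda[\phi]\|_{H^{-\lambda/2}(\R)}\lesssim|\phi|_{H^{\lambda/2}(\R)}$; Cauchy--Schwarz then yields $|\mathcal{D}(\xi^n_\pi,\xi^n_h)|+|\mathcal{D}(\zeta^n_\pi,\zeta^n_h)|\lesssim|\xi^n_\pi|_{H^{\lambda/2}(\R)}|\xi^n_h|_{H^{\lambda/2}(\R)}+|\zeta^n_\pi|_{H^{\lambda/2}(\R)}|\zeta^n_h|_{H^{\lambda/2}(\R)}$, and Young's inequality together with the fractional approximation property (\ref{eq:bestappr}) — applied to $u^n$ and, since $w^n=u^n+\tau\partial_t u^n$ inherits $H^{k+1}(\R)$-regularity from $u$, to $w^n$ — gives $|\xi^n_\pi|^2_{H^{\lambda/2}(\R)}+|\zeta^n_\pi|^2_{H^{\lambda/2}(\R)}\lesssim h^{2k+2-\lambda}$; multiplying by $\tau$ produces (\ref{eq:D}). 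I expect the interface bookkeeping in the first two estimates to be the main obstacle: one must verify that the projection-error traces seen by each upwind flux are exactly the Gauss--Radau nodes (and hence vanish where $|f'(u^n)|>h$), that the reuse clause of Definition~\ref{def:radau} — active only where $|f'(u^n)|\le h$, so that the offending jump term is a priori negligible — does no harm, and, for $\mathcal{L}^n$, that swapping $u^n$ for $w^n$ in the upwinding costs only $O(\tau)=O(h)$; threading all of this so that every leftover lands in $\tau h^{2k+2}$, $\tau\|\xi^n_h\|^2$, $\tau^5$, or the explicitly retained $\int_\R(\xi^{n+1}_\pi-\xi^n_\pi)\zeta^n_h\,dx$ is what makes the argument delicate and uses the full strength of Lemma~\ref{lem:a}, assumption (\ref{eq:flux}), the inverse inequalities, Proposition~\ref{prop:equiv} and the a priori estimate $\|e^n_u\|_\infty+\|e^n_w\|_\infty\lesssim h$.
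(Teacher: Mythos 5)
Your proposal follows essentially the same route as the paper: the same splitting of $\mathcal{K}^n$ into its $L^2$ and flux parts, the same use of the upwind/Gauss--Radau structure so that the projection-error traces seen by the numerical flux vanish where $|f'(u^n)|>h$ (with the $|f'(u^n)|\le h$ cells absorbed via inverse inequalities), the same extraction of the $-\tfrac12|f'(u^n)|\llbracket\xi^n_h\rrbracket^2\tau$ dissipation from the combined volume and interface contributions under the minimal accuracy assumption, the same handling of $E^n$ (giving $\tau^5$) and of the explicitly retained term $\int_{\R}(\xi^{n+1}_\pi-\xi^n_\pi)\zeta^n_h\,dx$, and the same duality--Young--approximation argument for (\ref{eq:D}). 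The paper organizes the flux part through an explicit upwind reference value $u^{n,\star}_h$ and the decompositions $W_2=X_1+\cdots+X_6$, $W_4=Y_1+\cdots+Y_6$, extracting the dissipation from $X_1+Y_1$ and bounding $G^n=\hat h(u^n_h)-f(u^{n,\star}_h)$ by a direct case analysis rather than by invoking Lemma \ref{lem:a}, but this is a presentational difference within the same Zhang--Shu-style argument, not a different proof.
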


\begin{proof}
We will prove the inequality (\ref{eq:K}) by following the proof of Lemma 5.4, 5.6 and 5.7 in \cite{ZhangShu06}; the proof of (\ref{eq:L}) works analogously and is thus omitted here. 

To this end, on each node $x_{j+1}$, let us introduce the upwind reference value $p^{n,\star}_h$ of a  function $p^n_h$ ($p^n_h=u^n_h,w^n_h$ or $\Pi_h^nu^n, \Pi_h^nw^n$) at time $t^n$, depending on the sign of $f'(u^n)$ on the adjacent elements $I_{j}\cup I_{j+1}$ in the following way: 
\begin{itemize}
	\item if $f'(u^n)<-h$ on $I_{j}\cup I_{j+1}$, set $p^{n,\star}_h:=p^{n,+}_h$,
	\item else, set $p^{n,\star}_h:=p^{n,-}_h$.
\end{itemize}
	
We then write
\begin{align}
\mathcal{K}^n(\xi^n_h) =& \int_{\R}(\zeta^n_\pi-\xi^n_\pi)\xi^n_h~dx + \tau\int_{\R}(f(u^n_h)-f(u^n))\partial^h_x\xi^n_h~dx\nonumber\\
&+\tau\sum_{j\in\mathbb{Z}}(\hat{h}(u^n_h)-f(u^{n,\star}_h))\llbracket\xi^n_h\rrbracket_j\nonumber + \tau\sum_{j\in\mathbb{Z}}(f(u^{n,\star}_h)-f(u^n))_{j} \llbracket\xi^n_h\rrbracket_j\nonumber\\
\hat{=}&~ W_1 + W_2 + W_3 + W_4.
\end{align}

By quasi-best-approximation we have
\begin{equation}
W_1\lesssim h^{2k+2}\tau + \tau\|\xi^n_h\|^2.
\end{equation}

Next let us study $G^n:=\hat{h}(u^n_h)-f(u^{n,\star}_h)$ on each node $x_{j+1}$ for different instances of the reference value $u^{n,\star}_h$. If $f'(u^n)<-h$ on $I_{j}\cup I_{j+1}$, the reference value is $u^{n,\star}_h=u^{n,+}_h$; considering the reference value  $u^{n,\star}_h=u^{n,-}_h$ is analogous and we omit the details. The value of the upwind flux $\hat{h}(u^n_h)$ however, depends on the sign variation of $f'$ between $u^{n,-}_h$ and $u^{n,+}_h$:
\begin{itemize}
	\item If $f'<0$ between $u^{n,-}_h$ and $u^{n,+}_h$, then $G^n=0$.
	\item If $f'>0$ between $u^{n,-}_h$ and $u^{n,+}_h$, then ${G^n=f(u^{n,-}_h)-f(u^{n,+}_h)}$ and there must be a zero $u^\ast$ between the intervals $[\min(u^{n,-}_h,u^{n,+}_h),\max(u^{n,-}_h,u^{n,+}_h)]$ and $[\inf_{I_j\cup I_{j+1}} u^n,\sup_{I_j\cup I_{j+1}} u^n]$ with $f'(u^\ast)=0$. By the mean value theorem, this implies $|G^n|\leq \|f''\|_\infty \|u^n-u^n_h\|_\infty|\llbracket u^n_h\rrbracket|$ since the difference $|u^n_h-u^\ast|$ is bounded by $|u^n_h-u^n|$.
	\item If $f'$ has a zero between $u^{n,-}_h$ and $u^{n,+}_h$, it follows that $|G^n|\leq\|f''\|_\infty|\llbracket u^n_h\rrbracket|^2$.
\end{itemize}
In total, since $|\llbracket u^n_h\rrbracket|\leq2\|u^n-u^n_h\|_\infty$, it follows that 
\begin{equation*}
|G^n|\lesssim \|u^n-u^n_h\|_\infty |\llbracket u^n_h\rrbracket|.
\end{equation*}
Thus, noticing that $\llbracket u^n_h\rrbracket=\llbracket\xi^n_h\rrbracket-\llbracket\xi^n_\pi\rrbracket$, by Young's and an inverse inequality we get
\begin{align}
|W_3|\lesssim \tau\|u^n-u^n_h\|_\infty(h^{-1}\|\xi^n_h\|^2 + |\xi^n_\pi|_\Gamma^2) \lesssim \tau \|\xi^n_h\|^2 + h^{2k+2}\tau,
\end{align}
where we have used the minimal accuracy assumption (\ref{eq:apriori}) and the quasi-best-approximation property (\ref{eq:bestappr}) in the second inequality.

The contributions $W_2$ and $W_4$ will be studied jointly by using the following Taylor expansions:
In each element,
\begin{align}
f(u^n_h)-f(u^n) =&~ f'(u^n)\xi^n_h + \frac{1}{2}f''(u^n)(\xi^n_h)^2 - f'(u^n)\xi^n_\pi -f''(u^n)\xi^n_h\xi^n_\pi\nonumber\\ &- \frac{1}{2}f'(u^n)(\xi^n_\pi)^2 + \frac{1}{6}f_u'''(\xi^n_h-\xi^n_\pi)^3 \hat{=}~ \phi_1+\ldots+\phi_6,
\end{align}
and, on each node,
\begin{align}
f(u^{n,\star}_h)-f(u^n) =&~ f'(u^n)\xi^{n,\star}_h + \frac{1}{2}f''(u^n)(\xi^{n,\star}_h)^2 - f'(u^n)\xi^{n,\star}_\pi -f''(u^n)\xi^{n,\star}_h\xi^{n,\star}_\pi\nonumber\\ &- \frac{1}{2}f'(u^n)(\xi^{n,\star}_\pi)^2 + \frac{1}{6}f_{u^\star}'''(\xi^{n,\star}_h-\xi^{n,\star}_\pi)^3 \hat{=}~ \psi_1+\ldots+\psi_6,
\end{align}
where $f_u'''$ and $f_{u^\star}'''$ are appropriate intermediate values. We thus have the following representations:
\begin{equation*}
W_2=X_1+\ldots+X_6\quad\text{and}\quad W_4=Y_1+\ldots+Y_6,
\end{equation*}
with
\begin{equation*}
X_i=\tau\int_{\R}\phi_i\partial^h_x\xi^n_h~dx\quad\text{and}\quad Y_i=\tau\sum_{j\in\mathbb{Z}}(\psi_i)_j\llbracket\xi^n_h\rrbracket_j,
\end{equation*}
for $i\in\{1,\ldots,6\}$.

After integrating by parts, we obtain
\begin{equation*}
X_1 + Y_1 = -\frac{\tau}{2}\int_{\R} (\xi^n_h)^2 \partial_x f'(u^n)~dx + \tau \sum_{j\in\mathbb{Z}}f'(u^n)_j(\xi^{n,\star}_h - \overline{\xi}^n_h)_j\llbracket\xi^n_h\rrbracket_j.
\end{equation*} 
By a case distinction regarding the chosen reference values, it is easy to see that either $f'(u^n)_j(\xi^{n,\star}_h - \overline{\xi}^n_h)_j=-\frac{1}{2}|f'(u^n)_j|\llbracket\xi^n_h\rrbracket_j$ or $|f'(u^n)_j-|f'(u^n)_j||\lesssim h$. We thus have
\begin{equation}
X_1+Y_1\leq C\tau\|\xi^n_h\|^2 - \frac{1}{2}|f'(u^n)|\llbracket\xi^n_h\rrbracket^2\tau.
\end{equation}

By inverse estimates and the minimal accuracy assumption (\ref{eq:apriori}), we have
\begin{equation}
X_2+Y_2\lesssim \tau h^{-1}\|\xi^n_h\|_\infty\|\xi^n_h\|^2\lesssim \tau \|\xi^n_h\|^2.
\end{equation}

Next, observe that by definition of the reference value on each node, the term $-f'(u^n)\xi^{n,\star}_\pi$ either vanishes or $|f'(u^n)|\lesssim h$. Thus, by orthogonality (\ref{eq:orth}) and an inverse inequality, we have
\begin{equation}
X_3+Y_3\lesssim \tau h^{2k+2} + \tau\|\xi^n_h\|^2.
\end{equation}

Finally, it is easy to see the remaining estimates,
\begin{align}
X_4+Y_4 &\lesssim \tau\|\xi^n_h\|^2\\
X_5+Y_5 &\lesssim \tau h^{2k+2} + \tau\|\xi^n_h\|^2\\
X_6+Y_6 &\lesssim \tau h^{2k+2} + \tau\|\xi^n_h\|^2,
\end{align} 
by Young's together with the inverse inequalities, quasi-best-approximation (\ref{eq:bestappr}) and the minimal accuracy assumption (\ref{eq:apriori}). This proves (\ref{eq:K}).

The fractional diffusion term is readily handled by Young's inequality together with (\ref{eq:bestappr}) and (\ref{eq:sym}) to yield (\ref{eq:D}). 
\end{proof}

\subsection{The Gronwall argument}\label{subsec:gronwall}

	In order to prepare the last quantity on the right-hand side of (\ref{eq:L}) for the Gronwall argument, we observe
	\begin{align}\label{eq:last term}
	\int_{\R}(\xi^{n+1}_\pi - \xi^n_\pi)\zeta^n_h~dx = & \sum_{j\in\mathbb{Z}}(\mathbf{1}_{O_j}(n)+\mathbf{1}_{O_j^c}(n))\int_{I_j}(\xi^{n+1}_\pi - \xi^n_\pi)\zeta^n_h~dx\nonumber\\ \lesssim &~ \sum_{j\in\mathbb{Z}}\mathbf{1}_{O_j}(n)h^{2k+2-1/\alpha}(\|u^0\|^2_{H^{k+1}(I_j)}+T\|\partial_t u\|^2_{L^2(0,T;H^{k+1}(I_j)})\nonumber\\
	&+\sum_{j\in\mathbb{Z}}\mathbf{1}_{O_j}(n)h^{1/\alpha}\|\xi^n_h\|^2_{L^2(I_j)}+ \tau h^{2k+2} + \tau\|\xi^n_h\|^2
	\end{align}
	where we have used Taylor expansion up to second order, Jensen's inequality, Proposition \ref{prop:equiv} and a splitting with regards to the set of times in $O_j$; cf.~(\ref{def:O_j}).
	Thus, instead of the classical discrete Gronwall framework
	\begin{equation*}
	a^{n+1}+b^n\leq (1+\gamma)a^n + d^n,
	\end{equation*}
	we have to deal with timestep-specific and elementwise effects that can be formalized as
	\begin{equation}\label{eq:Gronwall1}
	\sum_{j\in\mathbb{Z}}a^{n+1}_j + b^n \leq \sum_{j\in\mathbb{Z}} (1+\gamma^n_j)a_j^n + d^n_j + d^n,
	\end{equation}
	where in our setting
	\begin{align*}
	a^n_j&=\|\xi^n_h\|^2_{L^2(I_j)},\quad b^n=c\tau|u^n-u^n_h|^2_{H^{\lambda/2}(R)},\\
	d^n_j&=c\mathbf{1}_{O_j}(n)h^{2k+2-1/\alpha}\left(\|u^0\|^2_{H^{k+1}(I_j)}+T\|\partial_t u\|^2_{L^2(0,T;H^{k+1}(I_j)}\right),\\
	d^n&=c(\tau h^{2k+2} + \tau^5),\quad \gamma^n_j=c(\mathbf{1}_{O_j}(n)h^{1/\alpha}+\mathbf{1}_{O_j^c}(n)\tau)
	\end{align*}
	 for some constant $c>0$ independent of $n,h,\tau$ and $j$.
	Note that (\ref{eq:Gronwall1}) in particular yields
	\begin{equation}\label{eq:Gronwall2}
	\sum_{j\in\mathbb{Z}}\frac{a^{n+1}_j - (1+\gamma^n_j)a^n_j}{\prod_{m=0}^{n}(1+\gamma^m_j)} + b^n \leq \sum_{j\in\mathbb{Z}} d_j^n +d^n.
	\end{equation}
	Now, summing (\ref{eq:Gronwall1}) over $n$ we obtain a telescopic sum, i.e.
	\begin{equation}
	\sum_{j\in\mathbb{Z}}\frac{a^{N}_j}{\prod_{m=0}^{N-1}(1+\gamma_j^m)} + \sum_{n=0}^{N-1}b^n \lesssim a_0 + \sum_{n=0}^{N-1}\sum_{j\in\mathbb{Z}} d_j^n + \sum_{n=0}^{N-1}d^n,
	\end{equation}
	where by invoking Lemma \ref{lem:O_j},
	\begin{equation*}
	\sum_{n=0}^{N-1}\sum_{j\in\mathbb{Z}} d_j^n \lesssim h^{2k+2-2/\alpha},
	\end{equation*}
	and $a_0=0$ by definition of the initial datum for our scheme.
	It remains to use the bound from Lemma \ref{lem:O_j} to multiply both sides by appropriate factors of $(1+c\tau)$ and $(1+c h^{1/\alpha})$. Indeed, according to Lemma \ref{lem:O_j}, we have for all $j\in\mathbb{Z}$,
	\begin{equation*}
	1\leq\frac{(1+c\tau)^{T/\tau}(1+ch^{1/\alpha})^{\alpha T \sqrt[\alpha]{C_{t,\alpha}}h^{-1/\alpha}}}{\prod_{m=0}^{N-1}(1+\gamma_j^m)},
	\end{equation*}
	and thus in total, we get
	\begin{equation}
	a^N + \sum_{n=0}^{N-1}b^n \lesssim e^{c(T+\alpha T \sqrt[\alpha]{C_{t,\alpha}})} (h^{2k+2-2/\alpha} + h^{2k+2-\lambda} + \tau^4),
	\end{equation}
	where $a^N:=\sum_{j\in\mathbb{Z}}a^N_j$,
	which ends the proof of Theorem \ref{thm:main}.8ik
	
	\section*{Acknowledgement}
	
	J.G. is grateful for financial support by the Deutsche Forschungsgemeinschaft project “Dissipative solutions for the Navier-Stokes-Korteweg system and their numerical treatment” 525866748 within SPP 2410 Hyperbolic Balance Laws in Fluid Mechanics: Complexity, Scales, Randomness (CoScaRa). 

\bibliography{references}{}
\bibliographystyle{plain}

\end{document}